\newtheoremstyle{ptheorem}{1em}{0em}{\itshape}{}{\bfseries}{.}{.5em}{\thmname{#1}\thmnumber{ #2}\thmnote{ (\hspace{-.01pt}{#3})}}
\theoremstyle{ptheorem}
\newtheorem{thm}{Theorem}[section]
\newtheorem{pro}[thm]{Proposition}
\newtheorem{lem}[thm]{Lemma}
\newtheoremstyle{hdef}{1em}{0em}{}{}{\bfseries}{.}{.5em}{\thmname{#1}\thmnumber{ #2}\thmnote{ (\hspace{-.01pt}{#3})}}
\theoremstyle{hdef}
\newtheorem{dfn}[thm]{Definition}
\newtheorem{rem}[thm]{Remark}
\newtheoremstyle{obs}{1em}{0em}{\color{blue}}{}{\bfseries}{.}{.5em}{\thmname{#1}\thmnumber{ #2}\thmnote{ (\hspace{-.01pt}{#3})}}
\theoremstyle{obs}
\newtheoremstyle{premark}{1em}{0em}{
\addtolength{\@totalleftmargin}{1.5em}
\addtolength{\linewidth}{-1.5em}
\parshape 1 1.5em \linewidth}{}{\scshape}{.}{.5em}{}
\theoremstyle{premark}
\numberwithin{equation}{section}
\numberwithin{figure}{section}
\title{A constructive approach towards the Method of Solution-Regions}
\author{
F. Adri\'an F. Tojo\footnote{The author was partially supported by Ministerio de Econom\'ia y Competitividad, Spain, and FEDER, project MTM2013-43014-P, and by the Agencia Estatal de Investigaci\'on (AEI) of Spain under grant MTM2016-75140-P, co-financed by the European Community fund FEDER.} \\
\normalsize e-mail: fernandoadrian.fernandez@usc.es\\
\normalsize \emph{Instituto de Ma\-te\-m\'a\-ti\-cas, Facultade de Matem\'aticas,} \\ \normalsize\emph{Universidade de Santiago de Com\-pos\-te\-la, Spain.}\\ 
}
\date{}
\begin{document}
 \maketitle

\begin{abstract}
In this work we develop the theory of solution-regions with a constructive approach. We also extend the theory to the case of general linear conditions and provide various sets of sufficient hypotheses for existence and multiplicity results.
\end{abstract}

{\small\textbf{Keywords:} System of First Order Differential Equations, Multiple Solutions, Fixed Point Index, General Boundary Conditions, Solution-Regions, Upper and Lower Solutions

\textbf{MSC:} Primary 34B15; secondary 34A34, 34A12}

\section{Introduction}
In a recent paper of Frigon \cite{Frigon} the author developed a theory, named \emph{method of solution-regions}, in order to obtain results concerning the existence and multiplicity of solutions of the equation
\begin{equation}\label{eq1}u'(t)=f(t,u(t))\text{ for a.e. } t\in I:=[a,b],\ u\in B,
\end{equation}
where $I$ is a non-degenerate interval and $B$ is the family of functions satisfying initial conditions of the form
\begin{equation}\label{ci}
u(a)=r
\end{equation}
or the periodic boundary conditions
\begin{equation}\label{cp}
u(a)=u(b).
\end{equation}

The method of solution-regions is an outstanding generalization of various methods of obtaining uniqueness and multiplicity of solutions of differential problems, namely the methods of upper and lower solutions \cite{Pouso2001,Frigon1995}, strict upper and lower solutions \cite{mawhin1987,graefkong,el2015}, solution-tubes \cite{frigon2007,frigono} and strict solution-tubes \cite{frigon2007,frigonlotf}. Furthermore, the method is closely related to that of Gaines and Mahwin concerning what they called \emph{bound sets} \cite{GM1,GM2}. The definition of bound set extends that of solution region in the way presented in Remark \ref{rembs}, but the theory concerning them is developed in a non-comparable way as we will point out.

In this paper we take a constructive approach towards admissible regions. It is in Section 2 that we prove that, indeed, admissible regions, such as are defined in Definition~\ref{defar}, always have an admissible pair, something which formed part of the assumptions before \cite[Definition 3.1]{Frigon}. By unlinking the topological and analytical aspects of admissible regions we are able to reach many interesting conclusions regarding their nature (see Remarks~\ref{br} and~\ref{remint}).

In what concerns solution regions (Section 3) we state the refined definition of \emph{$C$-solution region}. This parameter dependent definition relaxes the restrictions imposed on the admissible pair (condition (H5)) while maintaining a simple proof. We also provide some alternative or complementary hypotheses (see (H4'), (H5') and (H6)) in order to derive, in the next section, existence results.

In Section 4 we generalize the problems studied in~\cite{Frigon} by allowing more general boundary conditions. First we deal with general linear conditions of the kind
\begin{equation}\label{cg}
\Gamma (u-u(a))=r,
\end{equation}
where $r\in{\mathbb R}$ and $\Gamma:{\mathcal C}([a,b],{\mathbb R})\to {\mathbb R}$ is a linear functional (which can be thought as $\Gamma:{\mathcal C}([a,b],{\mathbb R}^n)\to {\mathbb R}^n$ by action on each component) such that $M:=\Gamma(1)\ne0$ ($1$ understood as the constant function $1$ on $[a,b]$). Observe that, by Riesz\-–Mar\-kov\-–Ka\-ku\-ta\-ni Representation Theorem \cite[Theorem 7.2.4]{Ben}, there is a unique regular Borel (signed) measure $\mu$ on $[a,b]$ such that $\Gamma u=\int u\operatorname{d} \mu$ for every $u\in{\mathcal C}([a,b],{\mathbb R})$. Also, condition~\eqref{cg} generalizes condition~\eqref{cp}.
Just take $r=0$ and $\Gamma u=u(b)$ (observe that $\Gamma(1)=1\ne0$).

We will also work with the boundary condition
\begin{equation}\label{cg2}
\Gamma u=r,
\end{equation}
 It is clear that that condition~\eqref{cg2} generalizes condition~\eqref{ci} by defining $\Gamma$ as before. Also, observe that conditions~\eqref{cg} and~\eqref{cg2} overlap in some cases, but neither of them covers all of the cases of the other. At the end of Section 4 we provide an example to which the theory is applied.

 In Section 5 we deal with multiplicity results in the usual way through Fixed Point Index Theory. This scenario requires refined hypotheses (such as (H0'), (H4'') and (H5'')).

 Finally we present our conclusions in Section 6, where we talk about the prowess and limitations of this approach and present some guidelines to overcome the occurring difficulties.

 Throughout this paper we will work with the spaces ${\mathbb R}^n$ with the euclidean norm $\|\cdot\|$ and ${\mathcal C}(X,{\mathbb R}^n)$ the space of continuous functions with the supremum norm $\|\cdot\|_0$ where $X$ is some set. ${\mathbb R}^+$ will denote the interval $(0,+\infty)$.
\section{Admissible Regions}

Let us first state some basic definitions for the theory ahead.
\begin{dfn} Let $A\subset{\mathbb R}\times{\mathbb R}^n$. Given $t\in {\mathbb R}$ and $x\in{\mathbb R}^n$ we write
\[ A_t:=\{x\in{\mathbb R}^n\ :\ (t,x)\in A\},\] 
and
\[ A^x:=\{t\in {\mathbb R}\ :\ (t,x)\in A\}.\] 
If we consider the continuous natural inclusions $i_t:{\mathbb R}^n\to{\mathbb R}\times{\mathbb R}^n$ and $i^x:{\mathbb R}\to{\mathbb R}\times{\mathbb R}^n$ such that $i_t(x)=i^x(t)=(t,x)$, then $A_t=(i_t)^{-1}(A)$ and $A^x=(i^x)^{-1}(A)$. Similarly, we consider the natural projections $\pi_1:{\mathbb R}\times{\mathbb R}^n\to{\mathbb R}$ and $\pi_2:{\mathbb R}\times{\mathbb R}^n\to{\mathbb R}^n$ such that $\pi_1(t,x)=t$ and $\pi_2(t,x)=x$.
\end{dfn}
\begin{dfn}[{\cite[Definition 2.1]{Frigon}}] Let $D\subset I\times{\mathbb R}^n$. A map $f:D\to{\mathbb R}^m$ is a \emph{Carathéodory function} if
\begin{enumerate}
\item $f(t,\cdot)$ is continuous on $D_t$ for almost every $t\in I$,
\item $f(\cdot,x)$ is measurable for all $x\in\pi_2(R)$,
\item for all $k\in{\mathbb R}^+$, there exists $\psi_k\in L^1(I,{\mathbb R})$ such that $\|f(t,x)\|\le\psi_k(t)$ for a.\,e. $t$ and every $x$ such that $\|x\|\le k$ and $(t,x)\in D$, that is, the set
\[ \{t\in I\ :\ \|f(t,x)\|>\psi_k(t)\text{ for some }x\in R_t,\ \|x\|\le k\}\] 
has measure zero.
\end{enumerate}
\end{dfn}
\begin{dfn}\label{defar} Let $n\in{\mathbb N}$. A set $R\subset I\times{\mathbb R}^n$, is called an \emph{admissible region} if $R$ is compact and
\begin{enumerate}
\item[(H0)] $R_t\ne\emptyset$ for every $t\in I$.
\end{enumerate}

\end{dfn}
\begin{dfn}Let $R\subset I\times{\mathbb R}^n$ be an admissible region.
A pair of continuous functions $(h,p)$ where $h:I\times{\mathbb R}^n\to{\mathbb R}$ and $p=(p_1,p_2):I\times{\mathbb R}^n\to I\times{\mathbb R}^n$ is called an \emph{admissible pair associated to $R$} if
\begin{itemize}
\item[(H1)] $R=h^{-1}{((-\infty,0])}$.
\item[(H2)] The map $h$ has partial derivatives at $(t,x)$ for almost every $t$ and every $x$ with $(t,x)\in (I\times {\mathbb R}^n)\backslash R$ and $\frac{\partial h}{\partial t}$ and $\nabla_x h$ are locally Carathéodory maps on $(I\times {\mathbb R}^n)\backslash R$.
\item[(H3)] $p$ is bounded and such that $p(t,x)=(t,x)$ for every $(t,x)\in R$ and 
\begin{equation}\label{trcon}\left\langle\nabla_xh(t,x),p_2(t,x)-x\right\rangle\le0\text{ for a.\,e. $t$ and every $x$ with $(t,x)\in (I\times {\mathbb R}^n)\backslash R$.}\end{equation}
\end{itemize}
\end{dfn}
\begin{rem}\label{br} There are several things to take into account in this definition.
\begin{enumerate}
\item The definition of admissible region in \cite{Frigon} is more stringent in condition~\eqref{trcon}, where the inequality is taken in the strict sense. We will show that this is not necessary in order to get the same results, provided that we add another condition, (H6), later on, or that we alter condition (H4) ahead (see Remark \ref{remalt4}).
\item The definition of admissible region in \cite{Frigon} imposes the existence of an associated admissible pair (that is, it includes axioms (H1)-(H3) in the definition), something which we prove is always the case.
\item The definition of admissible region in \cite{Frigon} does not require explicitly $R$ to be compact, although that is a direct consequence of it.
\item How do we interpret the set on which condition~\eqref{trcon} holds? In the proof of \cite[Theorem~5.1]{Frigon} they use that, \emph{for any function $u\in W^{1,1}(I,{\mathbb R}^n)$ such that $(t,u(t))\in R$ and for every $t\in I$ almost everywhere on $\{t\ :\ h(t,u(t))>0\}$,
\[ \left\langle\nabla_xh(t,u(t)),p_2(t,u(t))-u(t)\right\rangle<0.\] }
In order to ensure this inequality holds in terms of a variable $x$ instead of $u(t)$ it is enough to have that the set
\[ \{t\in I\ :\ \left\langle\nabla_xh(t,x),p_2(t,x)-x\right\rangle\ge0\text{ for some }x\in R_t\}\] 
has measure zero.
\item In \cite{Frigon} the author highlighted, by providing several examples, the versatility of admissible regions, showing that they may have corners, their boundary may not be smooth and that they may not be proximate retracts. With the definition presented here it is obvious that those are possibilities for admissible regions.
\item If $(h,p)$ is an admissible pair and $\beta\in{\mathcal C}^1(I,{\mathbb R}^+)$, then $(\widetilde h,p)$ is also an admissible pair where $\widetilde h(t,x)=\beta(t)h(t,x)$. Hence, if $h(t,R_t)$ is bounded for every $t\in I$, taking $\beta$ such that $\beta(t)<1/\sup \|h(t,R_t)\|$, we have that $\widetilde h$, defined as before, is bounded.
\item If $(h_k,p)$ are an admissible pairs for $k=1,2$; then $(h_1+h_2,p)$ is an admissible pair.
\end{enumerate}
\end{rem}

Now we are ready to state some definitions and results in order to construct an admissible pair for a given region function.
\begin{dfn}Let $C,D\subset{\mathbb R}^n$. We define the \emph{distance of a point $x$ to the set} $C$ as
\[ d_C(x):=\inf\{||x-y||\ :\ y\in C\}.\] 
Analogously, we define the distance between $C$ and $D$ as
\[ d(C,D):=\inf\{||x-y||\ :\ x\in C,\ y\in D\}.\] 
Furthermore, if $C$ is convex, we denote by $P_C:{\mathbb R}^n\to C$ the \emph{projection} onto $C$. Remember that $P_C$ is a continuous function \cite[p. 108]{Holmes}.
\end{dfn}
\begin{thm}[{\cite[p. 63]{Holmes}}]\label{thmci} Let $C$ be a closed convex subset of a Hilbert space 
$H$, and let $\varphi(x): =\frac{1}{2} d_C(x)^2$ for every $x\in H$. Then $\varphi$ is a $C^1$ convex function on $H$ and $\nabla \varphi(x)=x-P_C(x)$.
\end{thm}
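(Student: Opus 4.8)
The plan is to build everything on the variational characterization of the projection $P_C$, so the first step is to record it: for $x\in H$ the point $p=P_C(x)$ is the unique element of $C$ with $\langle x-p,z-p\rangle\le 0$ for all $z\in C$. One direction follows by expanding $\|x-p-t(z-p)\|^2\ge\|x-p\|^2$ for $z\in C$ and $t\in(0,1]$, dividing by $t$ and letting $t\to0^+$; the converse is immediate from the same expansion at $t=1$. From this, the nonexpansiveness $\|P_C(x)-P_C(y)\|\le\|x-y\|$ drops out by writing the inequality for $x$ with $z=P_C(y)$, for $y$ with $z=P_C(x)$, adding the two, and applying Cauchy--Schwarz. (Existence and uniqueness of $P_C$ on a closed convex subset of a Hilbert space is classical, via the parallelogram law and completeness, and its continuity is already quoted from Holmes above.)

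Next I would dispatch convexity of $\varphi$. The distance $d_C$ is convex: given $x,y$, $\lambda\in[0,1]$ and $a,b\in C$, the point $\lambda a+(1-\lambda)b$ lies in $C$, so $d_C(\lambda x+(1-\lambda)y)\le\|\lambda(x-a)+(1-\lambda)(y-b)\|\le\lambda\|x-a\|+(1-\lambda)\|y-b\|$, and taking infima over $a,b$ gives $d_C(\lambda x+(1-\lambda)y)\le\lambda d_C(x)+(1-\lambda)d_C(y)$. Since $t\mapsto\tfrac12t^2$ is convex and nondecreasing on $[0,+\infty)$ and $d_C\ge0$, the composition $\varphi=\tfrac12 d_C^2$ is convex.

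The core of the argument is the gradient formula, proved directly from the definition of Fréchet derivative. Fix $x$ and set $p:=P_C(x)$. For the upper estimate, since $p\in C$,
\[ \varphi(x+h)\le\tfrac12\|x+h-p\|^2=\varphi(x)+\langle x-p,h\rangle+\tfrac12\|h\|^2. \]
For the lower estimate, put $q:=P_C(x+h)\in C$; then $\varphi(x)\le\tfrac12\|x-q\|^2$ and $\varphi(x+h)=\tfrac12\|x+h-q\|^2$, so
\[ \varphi(x+h)-\varphi(x)\ge\langle x-q,h\rangle+\tfrac12\|h\|^2=\langle x-p,h\rangle+\langle p-q,h\rangle+\tfrac12\|h\|^2. \]
Combining the two bounds and using $|\langle p-q,h\rangle|\le\|P_C(x)-P_C(x+h)\|\,\|h\|\le\|h\|^2$ gives $|\varphi(x+h)-\varphi(x)-\langle x-p,h\rangle|\le\|h\|^2=o(\|h\|)$ as $h\to0$. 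Hence $\varphi$ is Fréchet differentiable at $x$ with $\nabla\varphi(x)=x-P_C(x)$. Finally $x\mapsto x-P_C(x)$ is continuous because $P_C$ is (indeed nonexpansive), so $\varphi\in C^1(H)$.

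I expect the only genuine obstacle to be the lower estimate in the differentiability step: it is exactly where the nonexpansiveness of $P_C$, and hence the variational characterization, is used, the rest being bookkeeping. Alternatively, convexity could be obtained a posteriori from monotonicity of the gradient $x\mapsto x-P_C(x)$ (again a consequence of nonexpansiveness), but the direct argument above avoids invoking the differentiable-plus-monotone-gradient criterion.
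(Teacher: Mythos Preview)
Your proof is correct: the variational characterization of $P_C$, nonexpansiveness, convexity of $d_C$ (hence of $\varphi$), the two-sided estimate yielding Fr\'echet differentiability with $\nabla\varphi(x)=x-P_C(x)$, and continuity of the gradient are all handled cleanly. One cosmetic point: the combined bound is actually $|\varphi(x+h)-\varphi(x)-\langle x-p,h\rangle|\le\tfrac12\|h\|^2$ rather than $\|h\|^2$, since the upper estimate already gives $\tfrac12\|h\|^2$ and the lower estimate, after $\langle p-q,h\rangle\ge-\|h\|^2$, gives $-\tfrac12\|h\|^2$; your stated bound is of course still sufficient.

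As for comparison with the paper: there is nothing to compare. The paper does not prove this theorem at all --- it is quoted verbatim from Holmes \cite[p.~63]{Holmes} as a known tool and used as a black box (in the construction of Theorem~\ref{thmar} and in the example). Your argument supplies exactly the standard Hilbert-space proof one finds in convex-analysis texts, and it would be a perfectly adequate replacement for the citation.
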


\begin{pro}[{\cite[Proposition 3.3.6]{Krantz}}]\label{prowhit} Let $R\subset{\mathbb R}^n$ be a closed set. Then there is\\$\varphi\in{\mathcal C}^\infty({\mathbb R}^n,[0,+\infty))$ such that $\varphi^{-1}(0)=R$.
\end{pro}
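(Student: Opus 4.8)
The plan is to build $\varphi$ by a standard partition-of-unity-style summation of smooth bump functions supported on balls that exhaust the open complement of $R$. First I would dispose of the trivial case $R={\mathbb R}^n$ (take $\varphi\equiv0$) and otherwise set $U:={\mathbb R}^n\setminus R$, which is open. Using that ${\mathbb R}^n$ is second countable, I would write $U=\bigcup_{j\in{\mathbb N}}B(x_j,r_j)$ as a countable union of open Euclidean balls, each of them with closure contained in $U$ (for instance, balls with rational center and rational radius lying inside $U$); this also covers the case $R=\emptyset$, where $U={\mathbb R}^n$.

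Next I would fix the usual mollifier profile $\rho:{\mathbb R}\to[0,+\infty)$ given by $\rho(s)=e^{-1/(1-s)}$ for $s<1$ and $\rho(s)=0$ for $s\ge1$, which is of class $\mathcal C^\infty$, and define $\psi_j(x):=\rho\!\left(\|x-x_j\|^2/r_j^2\right)$. Each $\psi_j$ is then $\mathcal C^\infty$ and nonnegative, vanishes outside $\overline{B(x_j,r_j)}$, and is strictly positive precisely on $B(x_j,r_j)$; in particular each $\psi_j$ has compact support, so for every multi-index $\alpha$ the quantity $\|\partial^\alpha\psi_j\|_0$ is finite.

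The core step is to set $M_j:=1+\max\{\|\partial^\alpha\psi_j\|_0:|\alpha|\le j\}$, choose $c_j:=2^{-j}/M_j>0$, and define $\varphi:=\sum_{j\in{\mathbb N}}c_j\psi_j$. For each fixed multi-index $\alpha$ and all $j\ge|\alpha|$ one has $\|c_j\partial^\alpha\psi_j\|_0\le2^{-j}$, so the series $\sum_j c_j\partial^\alpha\psi_j$ converges uniformly on ${\mathbb R}^n$; by the standard theorem on term-by-term differentiation of uniformly convergent series of $\mathcal C^\infty$ functions, $\varphi\in\mathcal C^\infty({\mathbb R}^n,[0,+\infty))$ and $\partial^\alpha\varphi=\sum_j c_j\partial^\alpha\psi_j$. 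Finally, since every $c_j\psi_j\ge0$, we get $\varphi(x)=0$ if and only if $\psi_j(x)=0$ for all $j$, i.e.\ if and only if $x\notin\bigcup_j B(x_j,r_j)=U$, i.e.\ if and only if $x\in R$; hence $\varphi^{-1}(0)=R$.

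The only delicate point is the $\mathcal C^\infty$ convergence of the series: everything hinges on the fact that each $\psi_j$ is compactly supported, so that all of its derivatives are globally bounded, together with the diagonal choice of coefficients $c_j$ that damps the derivatives of order up to $j$. Once these are in place, the interchange of summation and differentiation is routine, and the nonnegativity of $\varphi$ and the identity $\varphi^{-1}(0)=R$ are immediate.
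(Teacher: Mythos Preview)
Your argument is correct. The paper does not supply its own proof of this proposition; it simply quotes the result from Krantz--Parks, so there is nothing in the paper to compare against line by line. In the Conclusions section the paper does hint that the construction in the cited reference proceeds via a \emph{Whitney cover} (a decomposition of the open complement into dyadic cubes with controlled size relative to the distance to $R$), whereas you cover $U={\mathbb R}^n\setminus R$ by countably many open balls with closure contained in $U$ and sum the associated compactly supported bumps with diagonally damped coefficients. The Whitney decomposition buys extra geometric control (local finiteness, cube sizes comparable to $d_R$) that is useful for sharper statements, but for the bare conclusion $\varphi\in\mathcal C^\infty$, $\varphi\ge0$, $\varphi^{-1}(0)=R$ your ball cover is entirely sufficient and slightly more elementary. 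The only point worth making explicit in your write-up is why such a countable cover by balls with closure in $U$ exists: for each $x\in U$ choose a rational ball $B(q,s)\ni x$ with $\overline{B(q,s)}\subset B(x,d_R(x))\subset U$; second countability then gives the countable family. Everything else---the smoothness of $\psi_j=\rho(\|x-x_j\|^2/r_j^2)$, the finiteness of $M_j$, the uniform convergence of $\sum_j c_j\partial^\alpha\psi_j$ for each multi-index $\alpha$, and the identification $\varphi^{-1}(0)=R$ from nonnegativity---is routine and correctly handled.
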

\begin{rem}\label{remmin}Observe that, in Proposition~\ref{prowhit}, since $\varphi$ attains an absolute minimum at every point of $R$, $\nabla \varphi(x)=0$ for every $x\in R$.
\end{rem}

\begin{thm}\label{thmar} Let $R\subset I\times{\mathbb R}^n$ be an admissible region. Then $R$ has an associated admissible pair $(h,p)$.
\end{thm}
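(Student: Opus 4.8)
The plan is to get $h$ almost for free from Whitney's theorem and to put all the effort into constructing $p$ by hand --- at the price of also reshaping $h$ far away from $R$ so that $h$ and $p$ become compatible with \eqref{trcon}.

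First I would settle (H1) and (H2). Since $R$ is compact it is a closed subset of $\mathbb R^{n+1}=\mathbb R\times\mathbb R^n$, so Proposition~\ref{prowhit} (used with ambient dimension $n+1$) provides $h_0\in\mathcal C^\infty(\mathbb R^{n+1},[0,+\infty))$ with $h_0^{-1}(0)=R$. I would then fix radii $M<M_0<M_1$ with $\pi_2(R)\subset B(0,M)$, a cut-off $\chi\in\mathcal C^\infty(\mathbb R^n,[0,1])$ equal to $1$ on $\overline B(0,M_0)$ and to $0$ outside $B(0,M_1)$, and $g\in\mathcal C^\infty([0,+\infty),[0,+\infty))$ with $g'\ge0$, $g\equiv0$ on $[0,M_0]$ and $g>0$ on $(M_0,+\infty)$ (an antiderivative of a suitable bump supported in $(M_0,+\infty)$), and set
\[ h(t,x):=\chi(x)\,h_0(t,x)+g(\|x\|),\qquad (t,x)\in I\times\mathbb R^n.\]
Because $g$ vanishes near $0$, the map $x\mapsto g(\|x\|)$ is $\mathcal C^\infty$, hence $h$ is $\mathcal C^\infty$; in particular $\partial h/\partial t$ and $\nabla_xh$ exist and are continuous, so they are (locally) Carath\'eodory, which is (H2). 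For (H1) I would observe that $h\ge0$ and that $h(t,x)=0$ forces $g(\|x\|)=0$, i.e. $\|x\|\le M_0$, where $\chi\equiv1$ and therefore $h_0(t,x)=0$; combined with $\pi_2(R)\subset B(0,M)$ this gives $h^{-1}(\{0\})=R$, that is $R=h^{-1}((-\infty,0])$.

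The bulk of the argument is then (H3). I would take $p_1(t,x):=t$ and $p_2(t,x):=P_{\overline B(0,M_1)}(x)$, the projection onto the closed ball $\overline B(0,M_1)=\{x:\|x\|\le M_1\}$; this is continuous (projection onto a closed convex set) and bounded, and since $\pi_2(R)\subset B(0,M)\subset\overline B(0,M_1)$ we get $p=(p_1,p_2)=\mathrm{id}$ on $R$. To check \eqref{trcon}: if $\|x\|\le M_1$ then $p_2(t,x)=x$ and the left-hand side is $0$; if $\|x\|>M_1$ then $\chi$ is identically $0$ near $x$, so $\nabla_xh(t,x)=g'(\|x\|)\,x/\|x\|$, while $p_2(t,x)-x=-(1-M_1/\|x\|)\,x$ with $1-M_1/\|x\|>0$, whence
\[ \langle \nabla_xh(t,x),\,p_2(t,x)-x\rangle=-\Bigl(1-\tfrac{M_1}{\|x\|}\Bigr)g'(\|x\|)\,\|x\|\le0.\]
So \eqref{trcon} in fact holds on all of $I\times\mathbb R^n$, a fortiori off $R$, and $(h,p)$ is an admissible pair.

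I expect the genuine obstacle to be reconciling the boundedness of $p$ with the sign condition \eqref{trcon}. One compatible choice, $p_2(t,x)=x-\nabla_xh_0(t,x)$, makes \eqref{trcon} immediate (the inner product becomes $-\|\nabla_xh_0\|^2$) but is unbounded because $x$ itself is; the bounded choice --- projection onto a large ball --- has no reason to respect \eqref{trcon} for an arbitrary Whitney $h_0$. The device that makes both succeed simultaneously is to flatten $h_0$ with the cut-off $\chi$ and add the radial term $g(\|x\|)$, so that outside a ball $h$ depends on $x$ only through $\|x\|$ and is radially nondecreasing --- precisely the property under which the ball projection satisfies \eqref{trcon}. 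The remaining points (global smoothness of $x\mapsto g(\|x\|)$, vanishing of $\nabla\chi$ where $\|x\|>M_1$) are routine and are built into the choices above; a cruder alternative would be to take $h=d_R(\cdot)^2$, but then the differentiability required in (H2) becomes delicate, which is why the smooth Whitney function is preferable.
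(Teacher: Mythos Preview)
Your proof is correct, and it takes a genuinely different route from the paper's. Both arguments use Whitney's $C^\infty$ extension (Proposition~\ref{prowhit}) and both resolve the same tension you identify---boundedness of $p_2$ versus the sign condition~\eqref{trcon}---but they resolve it from opposite ends.

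The paper applies Whitney not to $R$ alone but to the enlarged closed set $R\cup\bigl(I\times(\mathbb R^n\setminus D)\bigr)$, with $D$ an open ball slightly larger than one containing $\pi_2(R)$. It then blends the Whitney function $\varphi$ with $\tfrac12 d_C(x)^2$ across the annulus $D\setminus C$ to produce a $\mathcal C^1$ function $h$ that equals $\tfrac12 d_C(x)^2$ outside $D$, and takes $p_2(t,x):=x-\nabla_xh(t,x)$. This choice makes~\eqref{trcon} immediate, since $\langle\nabla_xh,\,p_2-x\rangle=-\|\nabla_xh\|^2$, and boundedness of $p_2$ follows because outside $D$ one has $p_2(t,x)=P_C(x)$. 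You instead keep Whitney applied to $R$ itself, flatten it with a cutoff and add a radial term so that $h$ is purely radial outside $B(0,M_1)$, and take $p_2$ to be the ball projection; boundedness is then automatic and~\eqref{trcon} follows from radial monotonicity of $h$ where $p_2\ne\mathrm{id}$.

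What each buys: the paper's construction yields the sharper inequality $\langle\nabla_xh,\,p_2-x\rangle=-\|\nabla_xh\|^2$, which is strictly negative wherever $\nabla_xh\ne0$; yours gives only $0$ on the whole ball $\|x\|\le M_1$. For the present definition (with $\le0$ in~\eqref{trcon}) this makes no difference, but it is relevant to the discussion in Section~6 about recovering the strict inequality~(H3$'$). On the other hand, your argument avoids the piecewise $\mathcal C^1$-matching across two spheres and the somewhat delicate choice of applying Whitney to a set with unbounded complement component, so it is arguably cleaner.
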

\begin{proof} Let $r\in{\mathbb R}^+$ such that $\pi_2(R)\subset C:=B_{{\mathbb R}^n}[0,r]$. Let $D:=B_{{\mathbb R}^n}(0,r+1)$ and $\varphi:{\mathbb R}^{n+1}\to{\mathbb R}$ be the function provided by Proposition~\ref{prowhit} for $R\cup(I\times({\mathbb R}^n\backslash D))$. Define
\[ h(t,x):=\begin{dcases}\varphi(t,x), & (t, x)\in I\times C,\\ [1- d_C(x)^2]\varphi(t,x)+\frac{1}{2} d_C(x)^2, & (t, x)\in I\times ( D\backslash C),\\ \frac{1}{2} d_C(x)^2, & (t, x)\in I\times ({\mathbb R}^n\backslash D).\end{dcases}\] 
 $h$ is of class ${\mathcal C}^1$ and $R=h^{-1}((-\infty,0])$, so (H1) and (H2) are satisfied. In fact, 
 \[ \nabla_xh(t,x):=\begin{dcases}\nabla_x\varphi(t,x), & (t, x)\in I\times C,\\ -2[x-P_C(x)]\varphi(t,x)+[1-d_C(x)^2]\nabla_x\varphi(t,x)+x-P_C(x), & (t, x)\in I\times ( D\backslash C),\\ x-P_C(x), & (t, x)\in I\times ({\mathbb R}^n\backslash D).\end{dcases}\]  
 Furthermore, $\nabla_x h(t,x)=0$ for every $(t,x)\in R$ (see Remark~\ref{remmin}).
Let $p=(p_1,p_2):I\times{\mathbb R}^n\to I\times{\mathbb R}^n$ with $p_1(t,x)=t$ and $p_2(t,x):=x-\nabla_xh(t,x)=$
 \[ \begin{dcases} x-\nabla_x\varphi(t,x), & (t, x)\in I\times C,\\ 2[x-P_C(x)]\varphi(t,x)-[1-d_C(x)^2]\nabla_x\varphi(t,x)+P_C(x), & (t, x)\in I\times ( D\backslash C),\\ P_C(x), & (t, x)\in I\times ({\mathbb R}^n\backslash D).\end{dcases}\]  
$p$ is continuous, bounded, and $p(t,x)=(t,x)$ for every $(t,x)\in R$.
Furthermore,
\[ \left\langle\nabla_xh(t,x),p_2(t,x)- x\right\rangle=-\|\nabla_x h(t,x)\|^2\le0,\] 
so (H3) holds as well.

\end{proof}

\section{Solution Regions}

In the previous section we have presented the basic aspects of admissible regions, including the construction of admissible pairs. Unfortunately, the properties of these regions are independent of the problem of study (in this case problem \eqref{eq1}). It is for this reason that we need the concept of solutions regions.

\begin{dfn} Let $C\in{\mathbb R}^+$. A triple $(R,(h,p))$ where $R\subset I\times{\mathbb R}^n$ is an admissible region and $(h,p)$ is an admissible pair for $R$ is called a \textit{$C$-solution region of problem~\eqref{eq1}}, if the following hold:
\begin{enumerate}
\item[(H4)] $\frac{\partial h}{\partial t}(t,x)+\left\langle\nabla_xh(t,x),f(p(t,x))\right\rangle\le0$ for a.e. $t$ and every $x$ with $(t,x)\not\in R$.
\item[(H5)] $h(a,u(a))\le0$ or $h(a,u(a))\le h(b,u(b))$ for any $u\in B$ such that $\|u\|_0\le C$.
\end{enumerate}
\end{dfn}
\begin{rem} In \cite{Frigon} they talk about solution regions and not $C$-solution regions. We will use this extra information given by the parameter $C$ for the more general boundary conditions~\eqref{cg} and~\eqref{cg2} we deal with in this work.
\end{rem}
\begin{rem}\label{remint}We can rewrite condition (H4) as
$\left\langle\nabla h(t,x),\widetilde f(p(t,x))\right\rangle\le0$ for a.e. $t$ and every $x$ with $(t,x)\not\in R$ where $\widetilde f=(1,f)$. Written in this way, this expression is reminiscent of the \emph{transversality condition} \cite[Equation (2.1)]{Cid}, necessary for some generalized Lipschitz uniqueness results, where $\widetilde f$ is the function defined to transform a non-autonomous problem into an autonomous one.

Thus expressed, (H4) has a geometric interpretation similar to the transversality condition. The function $\nabla h(t,x)$ can be thought as a normal field to the local foliation in $I\times {\mathbb R}^n$ where the leaves are defined by $h^{-1}(c)$ for each value $c\in{\mathbb R}$. Hence, (H4) implies that the function $\widetilde f$ points, at each point in $h^{-1}(c)$, in the direction of decreasing value of $c$, that is, against the flow $\nabla h(t,x)$ that takes one leave to another.
\end{rem}

\begin{rem}
In this article we will only use the statement of (H5) in the case conditions $B$ refer to conditions~\eqref{cg} or~\eqref{cg2}. The reader can check that, for the cases~\eqref{ci} and~\eqref{cp} (studied in \cite{Frigon}), condition (H5) can be rewritten as
\begin{itemize}
\item $h(a,r)\le0$ or $h(a,u(a))\le h(b,x)$ for any $x\in{\mathbb R}^n$, $\|x\|\le C$ in the case of condition~\eqref{ci} and
\item $h(a,x)\le0$ or $h(a,x)\le h(b,x)$ for any $x\in{\mathbb R}^n$, $\|x\|\le C$.
\end{itemize}
They are used with a similar statement in \cite{Frigon}.
\end{rem}
\begin{rem}\label{rembs} The concept of solution region is closely related to that of bound set in \cite{GM1,GM2}.
    \begin{dfn}[\cite{GM2}] We say that a set $A\subset I\times{\mathbb R}^n$ is a \emph{bound set relative to problem~\eqref{eq1}} if
        \begin{enumerate}
            \item $A$ is open in the relative topology of $I\times{\mathbb R}^n$.
            \item For any $(t_0,x_0)\in\partial A$ with $t_0\in(a,b)$ there  exists $V\in{\mathcal C}^1(I\times{\mathbb R}^n,{\mathbb R})$ such that
            \begin{enumerate}
                \item $A\subset V^{-1}(-\infty,0)$,
                \item $V(t_0,x_0)=0$,
                \item $\frac{\partial V}{\partial t}(t_0,x_0)+\left\langle\nabla_xV(t_0,x_0),f(t_0,x_0)\right\rangle\ne 0$.                
                \end{enumerate}
            
            \end{enumerate}
        
        \end{dfn}
Let $(R,(h,p))$  be a solution region of   problem~\eqref{eq1}. and consider $A:=\mathring R$. $A$ is open in the relative topology of $I\times {\mathbb R}^n$. Fix $(t_0,x_0)\in \partial A$. Using Proposition \ref{prowhit}, let $\psi\in{\mathcal C}^\infty(I\times{\mathbb R}^n,{\mathbb R}^n)$ such that $\psi\ge 0$ and $\psi^{-1}(0)={\mathbb R}^n\backslash A$. Let $V=h-\psi$. Taking the functional $V$ for every $(t_0,x_0)\in\partial A$ we can show that $A$ is a bound set relative to problem~\eqref{eq1}. Clearly, in the relative topology of $I\times {\mathbb R}^n$, not all compact sets are the closure of an open set and not all open sets have compact closures but, in practice, bound sets generalize solution regions.

The results concerning bound sets in \cite{GM2} have a different flavor than those presented here. For instance they require bound sets to be \emph{autonomous}, that is, with $V$ independent of $t_0$ and $t$ --cf. \cite[Definition 3.2]{GM2} and the $f$ occurring in problem~\eqref{eq1} has to be continuous --see \cite[Theorem 3.1]{GM2}.
    \end{rem}

As said before, in \cite{Frigon}, the inequality in condition~\eqref{trcon} is taken in the strict sense. We will need the following condition to make up for it.
\begin{itemize}
\item[(H6)] Let $(R,(h,p))$ be a solution region of problem~\eqref{eq1}. Then there exists $\widehat h$ such that $(R,(\widehat h,p))$ is a solution region of problem~\eqref{eq1} and there exist $t_1,t_2\in I$ such that $\widehat h(t_1,x)= h(t_1,x)$ for every $x\in{\mathbb R}^n$ and $\widehat h(t_2,x)\ne h(t_2,x)$ for every $x\in\ {\mathbb R}^n\backslash R_{t_2}$.
\end{itemize}

In the light of Remark~\ref{br}, points 6 and 7, condition (H6) is not as stringent as it may seem. In this line, the following lemma shows a sufficient condition for (H6) to hold.
\begin{lem} Let $(R,(h,p))$ be a solution region for~\eqref{eq1}. Assume that $h$ is bounded and $(h,p)$ satisfies
{\rm\begin{enumerate}
\item[\rm{(H4')}] There exist $\varepsilon\in{\mathbb R}^+$, $\delta\in{\mathbb R}^+$ and $t_0\in \mathring I$ such that $t\in[t_0-\delta,t_0+\delta]\subset \mathring I$ and\[ \frac{\partial h}{\partial t}(t,x)+\left\langle\nabla_xh(t,x),f(p(t,x))\right\rangle\le-\varepsilon,\]  for a.e. $t\in[t_0-\delta,t_0+\delta]$ and every $x$ with $(t,x)\not\in R$.
\end{enumerate} }
Then (H6) holds.
\end{lem}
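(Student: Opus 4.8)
The plan is to obtain $\widehat h$ as a \emph{time rescaling} of $h$: set $\widehat h(t,x):=\beta(t)\,h(t,x)$ for a suitably chosen $\beta\in{\mathcal C}^1(I,{\mathbb R}^+)$. By Remark~\ref{br}(6) the pair $(\widehat h,p)$ is then automatically an admissible pair for $R$, so the entire task reduces to choosing $\beta$ so that (H4) and (H5) are inherited by $\widehat h$ and so that the two pointwise requirements in (H6) hold for suitable $t_1,t_2$.

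Fix a constant $H\ge 1$ with $|h|\le H$ on $I\times{\mathbb R}^n$ (possible since $h$ is bounded). Since (H4') places $[t_0-\delta,t_0+\delta]\subset\mathring I$, we have $a<t_0-\delta$ and $t_0<b$. I then pick $\beta\in{\mathcal C}^1(I,{\mathbb R}^+)$ with: $\beta\equiv 1$ on $[a,t_0-\delta]$; $\beta$ nondecreasing on $[t_0-\delta,t_0]$ with $0\le\beta'\le\varepsilon/H$ there and $\beta(t_0)=1+s$ for a small $s\in{\mathbb R}^+$; and $\beta\equiv 1+s$ on $[t_0,b]$. Such $\beta$ exists once $s$ is small enough (e.g.\ take a triangular profile for $\beta'|_{[t_0-\delta,t_0]}$ with peak $\varepsilon/H$, giving $s=\delta\varepsilon/(2H)$); note $1\le\beta\le 1+s$ on all of $I$ and $\beta'\equiv 0$ off $[t_0-\delta,t_0]$.

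For (H4): wherever the partial derivatives of $h$ exist,
\[ \frac{\partial\widehat h}{\partial t}(t,x)+\left\langle\nabla_x\widehat h(t,x),f(p(t,x))\right\rangle=\beta'(t)\,h(t,x)+\beta(t)\Big(\frac{\partial h}{\partial t}(t,x)+\left\langle\nabla_x h(t,x),f(p(t,x))\right\rangle\Big). \]
For $(t,x)\notin R$ one has $h(t,x)\in(0,H]$. Off $[t_0-\delta,t_0]$ we have $\beta'(t)=0$ and the bracket is $\le 0$ by (H4), so the right-hand side is $\le 0$; on $[t_0-\delta,t_0]$ the bracket is $\le-\varepsilon$ by (H4'), and since $\beta(t)\ge 1$, $0\le\beta'(t)\le\varepsilon/H$ and $h(t,x)\le H$, the right-hand side is $\le(\varepsilon/H)H-\varepsilon=0$; so (H4) holds for a.e.\ $t$. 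For (H5): let $u\in B$ with $\|u\|_0\le C$; since $\beta(a)=1$ we have $\widehat h(a,u(a))=h(a,u(a))$, hence if $h(a,u(a))\le 0$ the first alternative of (H5) holds for $\widehat h$, while if $h(a,u(a))>0$ and $h(a,u(a))\le h(b,u(b))$ then $h(b,u(b))>0$ and so $\widehat h(b,u(b))=(1+s)h(b,u(b))\ge h(b,u(b))\ge h(a,u(a))=\widehat h(a,u(a))$, which is the second alternative. Thus $(R,(\widehat h,p))$ is again a solution region of~\eqref{eq1}. Finally $\widehat h(a,x)=h(a,x)$ for every $x\in{\mathbb R}^n$, while for $x\in{\mathbb R}^n\backslash R_b$ we have $h(b,x)>0$, whence $\widehat h(b,x)=(1+s)h(b,x)\ne h(b,x)$; taking $t_1=a$ and $t_2=b$ this is precisely (H6).

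The only delicate point is the construction of $\beta$ on $[t_0-\delta,t_0]$: it must be $C^1$, its derivative must be small enough (namely $\le\varepsilon/H$) that the slack $-\varepsilon$ supplied by (H4') absorbs the parasitic term $\beta'\,h$, and yet $\beta$ must not be identically $1$. This is exactly where the boundedness of $h$ enters, and it is the reason the rescaling factor is only allowed to move inside the interval where (H4') provides genuine strict slack; the remaining verifications are routine.
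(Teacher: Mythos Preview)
Your proof is correct and follows essentially the same approach as the paper: define $\widehat h(t,x)=\beta(t)h(t,x)$ with $\beta\in\mathcal C^1(I,\mathbb R^+)$ chosen so that $\|\beta'\|_0\le\varepsilon/H$ and $\beta\not\equiv 1$, using the slack from (H4') to absorb the parasitic term $\beta'h$. The only difference is cosmetic: the paper takes $\beta$ to be a bump supported in $[t_0-\delta,t_0+\delta]$ (so $\beta(a)=\beta(b)=1$, making (H5) immediate and giving $t_2=t_0$), whereas you take $\beta$ to be a monotone step (so $\beta(b)=1+s$, requiring your short monotonicity argument for (H5) and giving $t_2=b$); both choices work.
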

\begin{proof}Let $\beta\in{\mathcal C}^1(I,{\mathbb R}^+)$ such that $\beta|_{I\backslash[t_0-\delta,t_0+\delta]}=1$ and $\beta(t)>1$ for $t\in(t_0-\delta,t_0+\delta)$, $\|\beta'\|_0<\varepsilon/\|h\|_0$ for $t\in I$. Define $\widehat h(t,x)=\beta(t)h(t,x)$ for every $(t,x)\in I\times{\mathbb R}^n$. Clearly, $(\widehat h,p)$ is an admissible pair and $\widehat h(a,x)=h(a,x)$ and $\widehat h(b,x)=h(b,x)$, so $\widehat h$ satisfies (H5). Also $\widehat h(t_0,x)\ne h(t_0,x)$ for every $x\in R_{t_0}$.

Furthermore,

\begin{align*} & \frac{\partial \widehat h}{\partial t}(t,x)+\left\langle\nabla_x\widehat h(t,x),f(p(t,x))\right\rangle\\ = & \beta'(t)h(t,x)+\beta(t)\frac{\partial h}{\partial t}(t,x)+\left\langle\nabla_x(\beta h)(t,x),f(p(t,x))\right\rangle \\= & \beta'(t)h(t,x)+\beta(t)\left[\frac{\partial h}{\partial t}(t,x)+\left\langle\nabla_xh(t,x),f(p(t,x))\right\rangle\right] \\ \le &
\beta'(t)h(t,x)-\beta(t)\varepsilon\le \|\beta'\|_0\|h\|_0-\varepsilon\le0.\end{align*}

\end{proof}

\section{Existence results}
Now we recall the following lemma which will be the key to proving various results ahead. We present it in a slightly modified way, although the proof is the same as in \cite{Frigon}.

\begin{lem}[{\cite[Lemma 2.3]{Frigon}}]\label{lembar} Let $z\in{\mathbb R}$, $w\in{\mathcal C}(I,{\mathbb R})$ and $J=w^{-1}((z,+\infty))$. Assume that
\begin{enumerate}
\item $w\in W^{1,1}_{\operatorname{loc}}(J,{\mathbb R})$,
\item $w'(t)\le0$ for a.\,e. $t\in J$,
\item one of the following conditions holds:
\begin{enumerate}
\item $w(a)\le z$,
\item $w(a)\le w(b)$.
\end{enumerate}
\end{enumerate}
Then $w(t)\le z$ for all $t\in I$ or there exists $k>0$ such that $w(t)=k$ for all $t\in I$.
\end{lem}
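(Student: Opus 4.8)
The plan is to reduce the statement to a simple analysis of the monotone function $w$ on the (relatively) open set $J\subset I$ where $w$ exceeds $z$. The structure I would follow: first observe that $J$ is open in $I$ and decompose it into its connected components; by continuity of $w$ each component is an interval, and on each such interval $w$ is locally absolutely continuous with $w'\le0$ a.e., hence nonincreasing there. Second, I would look at where $J$ touches the endpoints $a$ and $b$: if $a\notin J$ then $w(a)\le z$ automatically, and if $b\notin J$ the argument is symmetric; the substantive case is when $a$ and/or $b$ lie in $J$.

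The key step is the following dichotomy. Suppose $w(t_0)>z$ for some $t_0\in I$, and let $(\alpha,\beta)$ be the connected component of $J$ containing $t_0$ (intersected with $I$). On $[\alpha',\beta']\subset(\alpha,\beta)$, $w$ is nonincreasing. If $\alpha>a$, then by continuity $w(\alpha)=z$ (since $\alpha$ is a boundary point of $J$ in $I$), and monotonicity forces $w(t_0)\le w(\alpha^+)=z$, a contradiction; so $\alpha=a$, i.e. $a\in\overline{J}$ and in fact $w(a)\ge z$. A symmetric consideration: if $\beta<b$ then $w(\beta)=z$, but also $w$ nonincreasing on the component gives $w(t_0)\ge z$ with the strict value persisting — this does not immediately contradict, so I need hypothesis (c). Under (a), $w(a)\le z$ together with $w(a)\ge z$ (just derived, since $a=\alpha$) gives $w(a)=z$, and then monotonicity on $[a,\beta)$ forces $w(t_0)\le w(a)=z$, contradiction — so in case (a) the first alternative of the conclusion holds. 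Under (b), we get $a=\alpha$ and $b=\beta$ (running the symmetric argument at $b$: if $b<\beta$ is impossible as $\beta\le b$, and if $\beta<b$ then $w(\beta)=z<w(t_0)\le w(a)$ contradicts $w(a)\le w(b)\le w(\beta)=z$... wait — $w(b)$ need not equal $w(\beta)$ unless $b=\beta$); carefully: either $b\in J$, whence $\beta=b$ and $w$ is nonincreasing on all of $I=[a,b]$, so $w(b)\le w(a)$, which with $w(a)\le w(b)$ gives $w(a)=w(b)$ and hence $w$ constant equal to $w(a)\ge z$ — but if $w(a)=z$ we are in the first alternative again; if $w(a)>z$, then $w\equiv w(a)=:k>z>0$... we need $k>0$, which holds since $z\ge$ ... actually $k>z$ and we only know $k=w(a)$; the statement says $k>0$, so I should note that in this branch $J=I$ forces $w>z$ everywhere, and in particular if $z\ge0$ then $k>0$; if $z<0$ this branch can't give a constant unless $k=w(a)>z$, and here $k>0$ is NOT automatic — so I must recheck: the second alternative "$w\equiv k$, $k>0$" presumably uses that the relevant $w$ in applications satisfies $w\ge0$ or $z\ge0$; I would simply carry the hypothesis as in \cite{Frigon} and note $k=w(a)>z$, replacing "$k>0$" by "$k>z$" or invoking whatever normalization the paper uses — but since I may assume the earlier statement verbatim, I record the component analysis and conclude $w$ is constant on $I$).

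So the remaining case under (b) is $b\notin J$, giving $w(b)\le z$; combined with $w(a)\le w(b)\le z$ we get $w(a)\le z$, reducing to case (a), already handled. Hence in all cases either $w\le z$ on $I$ or $w$ is constant on $I$ (with that constant $>z\ge$ the required bound), which is the conclusion.

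The main obstacle I anticipate is purely bookkeeping rather than conceptual: carefully handling the boundary behaviour of $w$ at the endpoints of each connected component of $J$ — using that $J$ is open \emph{relative to} $I$, so a component can legitimately contain $a$ or $b$ — and making sure the local Sobolev regularity $w\in W^{1,1}_{\mathrm{loc}}(J)$ genuinely yields that $w$ is nonincreasing on each \emph{closed} subinterval, then passing to the closure by continuity of $w$. Once that is set up, the dichotomy (c)(a) versus (c)(b) falls out by the elementary argument above; the "$w\equiv k$" alternative is exactly the degenerate situation $J=I$ forced by (c)(b) when $w$ never drops to $z$.
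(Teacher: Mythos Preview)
The paper does not actually prove this lemma: it is quoted from \cite[Lemma~2.3]{Frigon} with the remark ``the proof is the same as in \cite{Frigon}'', so there is no in-paper argument to compare against. Your approach --- decompose the relatively open set $J=w^{-1}((z,\infty))$ into its connected components, use $w\in W^{1,1}_{\mathrm{loc}}(J)$ with $w'\le0$ a.e.\ to get $w$ nonincreasing on each component, then show every component must have left endpoint $a$ (so there is at most one), and finally split on (c)(a)/(c)(b) --- is the natural and correct one, and is presumably what Frigon does.

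Two small comments. First, your write-up wanders a bit in the middle (the ``wait --- $w(b)$ need not equal $w(\beta)$'' aside); the clean formulation is: once you know the unique component has left endpoint $a$, either $b\notin J$, whence $w(b)\le z$ and (c)(b) gives $w(a)\le z$, reducing to (c)(a); or $b\in J$, forcing $J=I$, $w$ nonincreasing on $I$, and then $w(a)\le w(b)\le w(a)$ yields $w\equiv k:=w(a)>z$. Second, you are right to flag that the conclusion ``$k>0$'' only follows from the argument as ``$k>z$''. In every application in the paper (Proposition~\ref{proM0} with $w=\|u\|$, $z=C>0$; Theorem~\ref{thmprin} with $w=h(\cdot,u(\cdot))$, $z=0$) one has $z\ge0$, so $k>0$ holds there; the statement as written is a slight imprecision introduced in the ``slightly modified'' restatement and should read $k>z$.
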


\begin{rem}\label{remlembar} In Lemma~\ref{lembar}, if $w'(t)<0$ for a.\,e. $t\in J$, then $w$ cannot be constant so, in that case, $w(t)\le z$.
\end{rem}

Let $(h,p)$ be an admissible pair associated to an admissible region $R$. Consider again $r\in{\mathbb R}^n$ and a positive linear functional $\Gamma:{\mathcal C}(I,{\mathbb R})\to {\mathbb R}$. Let $M:=\Gamma(1)$. In what follows we will assume $M>0$. In the case we would like to work with a functional $\Gamma$ such that $M<0$, it is enough to do the change of variables $v=-u$ and study the problem $v'(t)=\widetilde f(t,v(t))$, $\widetilde\Gamma (v-v(a))=r$ ( $\widetilde\Gamma v=r$ respectively), where $\widetilde f(t,x)=-f(t,-x)$ and $\widetilde\Gamma=-\Gamma$.

 Define $g(s):=\Gamma(\chi_{[a,\cdot]}(s))$ for every $s\in I$ where $\chi_{[a,t]}(s)$ denotes the characteristic function of of the interval $[a,t]$. Observe that $g$ is nonnegative. Let $c\in L^1(I,{\mathbb R})$ be such that
$c(t)>\|f(p(t,x))\|$ for a.\,e. $t\in I$ and every $x\in {\mathbb R}^n$. Now, define
\[ f_R(t,x):=\begin{dcases} f(t,x), & (t,x)\in R,\\
f(p(t,x))+c(t)(p_2(t,x)- x), & (t,x)\in (I\times{\mathbb R}^n)\backslash R,\end{dcases}\] 

 and 
\[ \Theta u:= \Gamma\int_a^tf_R(s,u(s))\operatorname{d} s.\] 
Since $\Gamma u=\int u\operatorname{d} \mu$ for some unique regular Borel measure $\mu$ on $I$, we can apply Fubini's Theorem \cite[Theorem 3.7.5]{Ben}, and so
\[ \Theta u= \Gamma\int_a^bf_R(s,u(s))\chi_{[a,t]}(s)\operatorname{d} s=\int_a^{b} f_R(s,u(s))g(s)\operatorname{d} s .\] 

 Let $m:=\max\{\|x\|\ :\ x\in\pi_2(R)\}$, $C:=\max\left\{m,1+\|p_2\|_0\right\}$, $K>C$, $D:=B_{{\mathbb R}^n}[0,C]$, $E:=B_{{\mathbb R}^n}(0,K)$ and ${\mathcal U}:=\{u\in{\mathcal C}(I,{\mathbb R}^n)\ :\ \|u\|_0<K\}$.

 With the previous ingredients we can consider the problem
\begin{align}\label{eql} u'(t)= &\lambda f_R(t,u(t)),\\ \label{mbc}
u(a)= & P_D\left( \frac{\Gamma u- r}{M}\right) .
\end{align}

Observe that condition~\eqref{mbc} is equivalent to $\Gamma(u-u(a))= r$ when $u\in D$. The reason for the projection is that, due to the nature of the boundary condition~\eqref{cg}, we cannot, in general, bound $\|u(a)\|$ (unlike in the cases of~\eqref{ci} and~\eqref{cp}, cf. \cite{Frigon}) which is necessary in order for condition 3.(a) in Lemma~\ref{lembar} to be satisfied. A similar approach using a projection in the case of upper and lower solutions was used in \cite[Section~3]{CaPo}, where the nonlinear condition $L_1(u(a),u(b),u'(a),u'(b),u)=0$ occurred.

We could have presented ondition~\eqref{mbc} as
\begin{equation}\label{mbc2}Mu(a)= P_{MD}\left( \Gamma u- r\right) ,\end{equation}
where $MD:=\{Mx\in {\mathbb R}^n\ :\ x\in D\}$ thanks to the following lemma. Observe that, although~\eqref{mbc2} would allow to write condition~\eqref{mbc}
for the case $M=0$, doing this would be fruitless: for $M=0$ condition~\eqref{mbc2} reads $0=0$. \begin{lem} For any $\alpha\in[0,+\infty)$, $x\in{\mathbb R}^n$, we have that $\alpha P_D(x)= P_{\alpha D}\left( \alpha x\right) $.
\end{lem}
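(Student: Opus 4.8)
The statement to prove is elementary: for $\alpha\in[0,+\infty)$, $x\in{\mathbb R}^n$, one has $\alpha P_D(x)=P_{\alpha D}(\alpha x)$, where $D=B_{{\mathbb R}^n}[0,C]$. The plan is to split into two cases according to whether $\alpha=0$ or $\alpha>0$. The case $\alpha=0$ is immediate: $\alpha D=\{0\}$, so $P_{\alpha D}(\alpha x)=0=\alpha P_D(x)$, since $P_D(x)$ is a well-defined point of $D$ and multiplying it by $0$ gives $0$.

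For the main case $\alpha>0$, I would use the variational characterization of the projection onto a closed convex set in a Hilbert space: $y=P_E(z)$ if and only if $y\in E$ and $\langle z-y,\,w-y\rangle\le 0$ for all $w\in E$ (this is the standard obtuse-angle criterion; alternatively one can use Theorem~\ref{thmci}, since $P_E$ is the point realizing $d_E$). I would set $y:=P_D(x)$, so that $y\in D$ and $\langle x-y,\,w-y\rangle\le 0$ for every $w\in D$. Multiplying the membership $y\in D$ by $\alpha$ gives $\alpha y\in\alpha D$ (here $\alpha D=B_{{\mathbb R}^n}[0,\alpha C]$ is again closed and convex). For the inequality, any element of $\alpha D$ is of the form $\alpha w$ with $w\in D$, and then
\[
\langle \alpha x-\alpha y,\ \alpha w-\alpha y\rangle=\alpha^2\langle x-y,\ w-y\rangle\le 0,
\]
using $\alpha^2\ge 0$. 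By the characterization of the projection, this says exactly $\alpha y=P_{\alpha D}(\alpha x)$, i.e. $\alpha P_D(x)=P_{\alpha D}(\alpha x)$, as desired.

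There is no real obstacle here; the only point requiring a word of care is ensuring that $\alpha D$ is genuinely the scaled ball and remains closed and convex so that the projection onto it is defined — this is clear since scaling by a nonnegative constant preserves both closedness and convexity, and $\alpha D=B_{{\mathbb R}^n}[0,\alpha C]$. One could equally give a direct computational argument (for $\|x\|\le C$ both sides equal $\alpha x$; for $\|x\|>C$, $P_D(x)=Cx/\|x\|$ and $P_{\alpha D}(\alpha x)=\alpha C\,(\alpha x)/\|\alpha x\|=\alpha C x/\|x\|$ when $\alpha>0$), but the variational argument above is cleaner and avoids case analysis on $\|x\|$.
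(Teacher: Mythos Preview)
Your proof is correct. The paper, however, takes the direct computational route that you mention at the end as an alternative: it simply writes out
\[
\alpha P_D(x)=\begin{cases}\alpha x,&\|x\|\le C,\\ \alpha C\,x/\|x\|,&\|x\|>C,\end{cases}
\qquad
P_{\alpha D}(\alpha x)=\begin{cases}\alpha x,&\|\alpha x\|\le \alpha C,\\ \alpha C\,\alpha x/\|\alpha x\|,&\|\alpha x\|>\alpha C,\end{cases}
\]
and observes that the two expressions coincide. Your variational argument via the obtuse-angle characterization is a genuinely different path: it never uses that $D$ is a ball, so it proves the statement for an arbitrary closed convex set $D$, and it avoids the explicit case split on $\|x\|$. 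The trade-off is that the paper's computation is entirely self-contained from the definition of $P_D$ for a ball, whereas your argument imports the variational characterization of the metric projection (which is standard, but not stated in the paper). Either is fine here; your approach is slightly more conceptual, the paper's slightly more elementary.
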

\begin{proof} We have that
\[ \alpha P_D(x)=\begin{dcases} \alpha x, & \|x\|\le C, \\ \alpha C\frac{x}{\|x\|}, & \|\alpha x\|
> C,\end{dcases}\quad P_{\alpha D}\left( \alpha x\right) =\begin{dcases} \alpha x, & \|\alpha x\|\le \alpha C, \\ \alpha C\frac{\alpha x}{\|\alpha x\|}, & \|\alpha x\|> \alpha C.\end{dcases}\] 
Written this way, the result is clear.
\end{proof}

Let, $\displaystyle\Phi u:=\Gamma u- P_{MD}\left( \Gamma u- r\right) $ and consider the operator ${\mathcal J}:[0,1]\times{\mathcal C}(I,{\mathbb R}^n)\to{\mathcal C}(I,{\mathbb R}^n)$ defined by
\[ {\mathcal J}(\lambda,u):=u(a)+\lambda \int_a^tf_R(s,u(s))\operatorname{d} s-(1+\lambda(t-a))(\lambda\Theta u- \Phi u).\] 

Inspired in \cite{Frigon}, we present the following results concerning the existence of solutions.
\begin{pro}\label{proM0} Let $f:I\times{\mathbb R}^n\to{\mathbb R}^n$ be a Carathéodory function and $(h,p)$ an admissible pair associated to an admissible region $R$. Assume (H6) holds and $M>0$. Then, for every $\lambda\in[0,1]$, problem~\eqref{eql},\eqref{mbc} has at least one solution. Moreover, $\operatorname{ind}({\mathcal J}(\lambda,\cdot),{\mathcal U})=(-1)^n$ for every $\lambda\in[0,1]$.
\end{pro}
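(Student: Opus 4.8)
The plan is to prove existence of solutions to~\eqref{eql},\eqref{mbc} for each fixed $\lambda\in[0,1]$ by showing that the solutions of that problem are exactly the fixed points of the map ${\mathcal J}(\lambda,\cdot)$ on ${\mathcal C}(I,{\mathbb R}^n)$, and then computing the fixed point index of ${\mathcal J}(\lambda,\cdot)$ on the open bounded set ${\mathcal U}$. First I would verify the correspondence: if $u$ is a fixed point of ${\mathcal J}(\lambda,\cdot)$, then evaluating the expression at $t=a$ forces $0=\lambda\Theta u-\Phi u$ (since the integral and the factor $(t-a)$ vanish there), whence both the boundary condition $u(a)=P_D((\Gamma u-r)/M)$ holds (unpacking $\Phi u$ and using the previous lemma on $\alpha P_D$, or just $\Phi u=\lambda\Theta u$ combined with the definition of $\Theta$ and $g$) and the differential equation $u'=\lambda f_R(t,u)$ follows by differentiating $u(t)=u(a)+\lambda\int_a^t f_R(s,u(s))\,\mathrm ds$. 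Conversely a solution of~\eqref{eql},\eqref{mbc} satisfies $\lambda\Theta u-\Phi u=0$ and hence is a fixed point. The homotopy-invariance and normalization of the fixed point index then reduce the whole statement to a single computation.

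Second, I would establish compactness of ${\mathcal J}$: since $f$ is Carathéodory and $p$ is bounded, $f_R(\cdot,u(\cdot))$ is dominated by an $L^1$ bound uniformly for $u$ in bounded sets (using condition 3 of the Carathéodory definition together with the bound $\|f(p(t,x))\|<c(t)$ and $\|p_2\|_0<\infty$), so the integral operator $u\mapsto\int_a^t f_R(s,u(s))\,\mathrm ds$ is completely continuous on bounded sets by the Arzelà--Ascoli theorem; $\Gamma$ is continuous, $P_{MD}$ is continuous, and the remaining terms are finite-dimensional or multiplication by the fixed continuous function $1+\lambda(t-a)$, so ${\mathcal J}$ is a completely continuous map on $[0,1]\times{\mathcal C}(I,{\mathbb R}^n)$.

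Third — and this is the main obstacle — I would build a homotopy from ${\mathcal J}(\lambda,\cdot)$ to a map with an easily computable index while keeping all fixed points inside ${\mathcal U}$, i.e. establishing the a priori bound $\|u\|_0<K$ for every fixed point arising along the homotopy. The natural homotopy is $(\sigma,u)\mapsto {\mathcal J}(\sigma\lambda,u)$ for $\sigma\in[0,1]$ (or, more robustly, scaling the nonlinearity and the $\Theta$, $\Phi$ terms jointly). At $\sigma=0$ the map becomes $u\mapsto u(a)+\Phi u$, whose fixed points satisfy $\Phi u=0$, i.e. $u(a)=\Gamma u-P_{MD}(\Gamma u-r)=\Gamma u$... wait, more precisely the constant function $u\equiv P_D(\,(\Gamma u - r)/M\,)$; restricting to the finite-dimensional space of constants this is an explicit map on ${\mathbb R}^n$ whose index is $(-1)^n$ because $\Phi$ restricted to constants is, up to the projection, the map $x\mapsto Mx - P_{MD}(Mx-r)$, a small perturbation of $x\mapsto -Mx+r$ wait no — one computes directly that on constants ${\mathcal J}(0,\cdot)$ has the form $x\mapsto P_D((Mx-r)/M)$ restricted appropriately, and its Leray--Schauder / fixed point index over $E$ equals $(-1)^n$ since its "linear part at infinity'' is the contraction-like map with derivative $-\mathrm{Id}$ scaled by $1$. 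The a priori estimate is where condition (H6), hypothesis (H4) (through (H5) and Lemma~\ref{lembar} applied to $w(t)=h(t,u(t))$), and the definition of $f_R$ all enter: one shows that if $\|u\|_0=K$ then $u$ cannot be a fixed point, by first using the structure of $f_R$ and (H4) to force $(t,u(t))\in R$ for all $t$ (via $w(t)=h(t,u(t))$ satisfying the hypotheses of Lemma~\ref{lembar}, with Remark~\ref{remlembar} and (H6) ruling out the nonzero-constant alternative), hence $\|u(t)\|\le m\le C<K$, contradicting $\|u\|_0=K$.

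Finally, I would assemble the pieces: complete continuity plus the a priori bound means the fixed point index $\operatorname{ind}({\mathcal J}(\sigma\lambda,\cdot),{\mathcal U})$ is well defined and constant in $\sigma$ by homotopy invariance, so $\operatorname{ind}({\mathcal J}(\lambda,\cdot),{\mathcal U})=\operatorname{ind}({\mathcal J}(0,\cdot),{\mathcal U})$; the latter reduces to a fixed point index over the ball $E\subset{\mathbb R}^n$ of the explicit finite-dimensional map, which one computes to be $(-1)^n$ (it is homotopic, through maps without fixed points on $\partial E$, to $x\mapsto -x$, whose Brouwer degree on $E$ is $(-1)^n$). Nonzero index then yields a fixed point, i.e. a solution of~\eqref{eql},\eqref{mbc}, for every $\lambda\in[0,1]$. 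The delicate bookkeeping is ensuring the a priori bound holds simultaneously for \emph{all} parameters along the homotopy and that the projection $P_D$ in the boundary condition does not spoil the argument that forces $(t,u(t))\in R$; I expect to handle the latter by noting that once $u(t)\in D$ for all $t$ the projection is inactive.
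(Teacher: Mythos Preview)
Your overall architecture (fixed points $\leftrightarrow$ solutions, complete continuity, homotopy to a finite-dimensional map, index $(-1)^n$) matches the paper, but the a~priori bound---the ``main obstacle'' you correctly identify---is where your proposal breaks down.

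You propose to obtain $\|u\|_0<K$ by applying Lemma~\ref{lembar} to $w(t)=h(t,u(t))$ and invoking (H4), (H5) and (H6) to force $(t,u(t))\in R$. This fails for three reasons. First, Proposition~\ref{proM0} does \emph{not} assume (H4) or (H5); only the admissible-pair axioms (H1)--(H3) and (H6) are available (the solution-region hypotheses enter only later, in Theorem~\ref{thmprin}). Second, even with (H4) the differential inequality you need does not follow for $\lambda\ne 1$: on $\{h(t,u(t))>0\}$ one gets
\[
\frac{\operatorname d}{\operatorname dt}h(t,u(t))=\frac{\partial h}{\partial t}+\lambda\langle\nabla_x h,f(p)\rangle+\lambda c(t)\langle\nabla_x h,p_2-u\rangle,
\]
and (H4) controls $\frac{\partial h}{\partial t}+\langle\nabla_x h,f(p)\rangle$, not the convex combination with coefficient $\lambda$; for $\lambda=0$ the expression reduces to $\partial h/\partial t$, about which nothing is assumed. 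Third, (H5) presupposes $\|u\|_0\le C$, which is exactly what you are trying to prove, so the appeal is circular.

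The paper avoids $h$ entirely in this step and applies Lemma~\ref{lembar} to $w(t)=\|u(t)\|$ instead. The modified boundary condition~\eqref{mbc} gives $\|u(a)\|=\|P_D(\cdot)\|\le C$ directly (this is precisely why the projection $P_D$ was introduced), so hypothesis~3(a) of the lemma holds with $z=C$. On $\{\|u(t)\|>C\}$ one has $(t,u(t))\notin R$, hence $u'=\lambda[f(p)+c(t)(p_2-u)]$, and a short Cauchy--Schwarz computation using $\|f(p(t,x))\|<c(t)$ and $C\ge 1+\|p_2\|_0$ yields $\|u(t)\|'<0$ for $\lambda>0$. Remark~\ref{remlembar} then gives $\|u\|_0\le C<K$ without (H4), (H5) or even (H6). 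Once you have this bound, your endgame (reduction to constants, homotopy to a map whose fixed-point index on $E$ is $(-1)^n$) is essentially what the paper does, via the auxiliary homotopy ${\mathcal P}(\lambda,x)=2x+\lambda\Phi x$.
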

\begin{proof}${\mathcal J}$ is a continuous and completely continuous operator (cf. \cite[Lemma 2.2]{Frigon}). Also, the fixed points of ${\mathcal I}$ are solutions of~\eqref{eql},\eqref{mbc}. Indeed, if $u={\mathcal J}(\lambda,u)$ then, for every $t\in I$,
\begin{equation*}u(t)=u(a)+\lambda \int_a^tf_R(s,u(s))\operatorname{d} s-(1+\lambda(t-a))(\lambda\Theta u- \Phi u).\end{equation*}
In particular, for $t=a$, we get that $\lambda\Theta u=\Phi u$, so
\begin{equation}\label{eqpfb}u(t)=u(a)+\lambda \int_a^tf_R(s,u(s))\operatorname{d} s.\end{equation}
Applying $\Gamma$ to both sides in~\eqref{eqpfb}, we obtain $\Gamma u=Mu(a)+ \lambda\Theta u=Mu(a)+ \Phi u$. Hence,
\[ u(a)=\frac{\Gamma u-\Phi u}{M}=P_D\left( \frac{\Gamma u- r}{M}\right) ,\] 
and condition~\eqref{mbc} is satisfied.
Also, for a.\,e. $t\in I$,
\[ u'(t)=\lambda f_R(t,u(t)),\] 
so $u$ is a solution of~\eqref{eql},\eqref{mbc} for every $\lambda\in[0,1]$.

Now we prove that the solutions of~\eqref{eql},\eqref{mbc} are bounded by $C$. Assume $u$ is a solution of equation~\eqref{eql},\eqref{mbc} for some $\lambda\in[0,1]$, that is,
\begin{equation*}u(t)=u(a)+\lambda \int_a^tf_R(s,u(s))\operatorname{d} s.\end{equation*}
 Consider first the case $\lambda=0$. In that case $u\equiv x$ for some constant $x\in{\mathbb R}^n$, so
 \[ \|u\|_0=\|x\|=\|u(a)\|=\left\| P_D\left( \frac{\Gamma u- r}{M}\right) \right\|\le C.\] 

Now we study the case $\lambda>0$.

 Then, almost everywhere on $\{t\in I\ :\ \|u(t)\|>C\}$,
\begin{align*}\|u(t)\|'= & \left\langle\frac{u(t)}{\|u(t)\|},u'(t)\right\rangle=\left\langle\frac{u(t)}{\|u(t)\|},\lambda f_R(s,u(t))\right\rangle \\= & \left\langle\frac{u(t)}{\|u(t)\|},\lambda [f(p(t,u(t)))+c(t)(p_2(t,u(t))- u(t))]\right\rangle
\\= & -\lambda c(t)\|u(t)\|+\left\langle\frac{u(t)}{\|u(t)\|},\lambda [f(p(t,u(t)))+c(t)p_2(t,u(t))]\right\rangle
\\ \le & -\lambda c(t)C+\left|\left\langle\frac{u(t)}{\|u(t)\|},\lambda [f(p(t,u(t)))+c(t)p_2(t,u(t))]\right\rangle\right|
\\ \le & \lambda c(t)\left(1+\|p_2\|_0-C\right)<0,
\end{align*}
 where the last inequality is a consequence of the definition of $c$. Then, by Lemma~\ref{lembar} and Remark~\ref{remlembar}, $\|u\|_0\le C$.

By the homotopy property of the fixed point index, $\operatorname{ind}({\mathcal J}(\lambda,\cdot),{\mathcal U})=\operatorname{ind}({\mathcal J}(0,\cdot),{\mathcal U})$ for every $\lambda\in[0,1]$.

 Observe that \[ {\mathcal J}(0,u)(t)=u(a)+ \Phi u.\] 
 is constant for $t\in I$. By the contraction property of the index (see \cite{Granas}),
 \[ \operatorname{ind}({\mathcal J}(0,\cdot),{\mathcal U})=\operatorname{ind}({\mathcal J}(0,\cdot)|_{E},E).\] 

 Now let ${\mathcal P}(\lambda,x)= 2x+ \lambda\Phi x$ for $x\in E$, $\lambda\in[0,1]$. Observe that ${\mathcal P}(1,\cdot)={\mathcal J}(0,\cdot)$. Clearly, for $\lambda=0$, there is no $x\in\partial E$ such that $x={\mathcal P}(0,x)$. Assume there exists $x\in\partial E$ and $\lambda\in(0,1]$ such that $x={\mathcal P}(\lambda,x)$. We have that
 \begin{align*}0= & {\mathcal P}(\lambda,x)-x= x+\lambda\left( \Gamma x- P_{MD}\left( Mx- r\right) \right) =
 x+\lambda\left( Mx- P_{MD}\left( Mx- r\right) \right) .\end{align*}
Thus,
\begin{align*} & (1+\lambda M)x=\lambda P_{MD}\left( Mx- r\right) .\end{align*}
Taking norms,
\begin{align*} & (1+\lambda M)K=\lambda\left\|P_{MD}\left( Mx- r\right) \right\|\le \lambda M C<\lambda MK ,\end{align*}
which is a contradiction.

Therefore, by the homotopy property of the index,
\begin{align*}\operatorname{ind}({\mathcal J}(\gamma,\cdot),{\mathcal U}) = &\operatorname{ind}({\mathcal J}(0,\cdot),E)= \operatorname{ind}({\mathcal P}(1,\cdot),E)=\operatorname{ind}({\mathcal P}(0,\cdot),E) = \operatorname{ind}(\operatorname{Id},E)=(-1)^n.\end{align*}
Thus, ${\mathcal P}(\lambda,\cdot)$ has a fixed point, and hence problem~\eqref{eql},\eqref{mbc} a solution, for every $\lambda\in[0,1]$.
\end{proof}

\begin{thm}\label{thmprin}Let $f:I\times{\mathbb R}^n\to{\mathbb R}^n$ be a Carathéodory function and $(h,p)$ an admissible pair associated to an admissible region $R$ such that $(R,(h,p))$ is a $C$-solution region of~\eqref{eq1}. Assume (H6) holds and $M> 0$. Then problem~\eqref{eq1},\eqref{cg} has a solution $u\in W^{1,1}(I,{\mathbb R}^n)$ such that $(t,u(t))\in R$ for every $t\in I$.
\end{thm}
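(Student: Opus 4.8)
The plan is to deduce the statement from Proposition~\ref{proM0} at $\lambda=1$. That proposition yields a fixed point $u$ of ${\mathcal J}(1,\cdot)$, hence a solution of~\eqref{eql} with $\lambda=1$ --- so $u'=f_R(\cdot,u(\cdot))$ a.e., and $u\in W^{1,1}(I,{\mathbb R}^n)$ because $f_R(\cdot,u(\cdot))\in L^1$ --- satisfying the boundary condition~\eqref{mbc} and the bound $\|u\|_0\le C$. Since $\|u\|_0\le C$, condition~\eqref{mbc} reduces to $\Gamma(u-u(a))=r$, i.e.\ $u$ satisfies~\eqref{cg}; in particular $u$ belongs to the class $B$ to which (H5) refers. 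It therefore suffices to prove that $(t,u(t))\in R$ for every $t\in I$: granted this, the definition of $f_R$ gives $f_R(t,u(t))=f(t,u(t))$, so $u$ also solves~\eqref{eq1}.

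To obtain $(t,u(t))\in R$ for all $t$, I would set $w(t):=h(t,u(t))$, which is continuous since $h$ and $u$ are, and apply Lemma~\ref{lembar} with $z:=0$ to $w$ on $J:=w^{-1}((0,+\infty))$; by (H1), $J=\{t\in I:(t,u(t))\notin R\}$ is open in $I$. On any compact subset of $J$ the curve $t\mapsto(t,u(t))$ remains in a compact subset of the open set $(I\times{\mathbb R}^n)\backslash R$, where $\partial h/\partial t$ and $\nabla_x h$ are Carathéodory by (H2); as $u$ is absolutely continuous, the chain rule gives $w\in W^{1,1}_{\operatorname{loc}}(J,{\mathbb R})$ with
\[ w'(t)=\frac{\partial h}{\partial t}(t,u(t))+\left\langle\nabla_xh(t,u(t)),f_R(t,u(t))\right\rangle\quad\text{for a.e.\ }t\in J.\]
For a.e.\ $t\in J$ one has $(t,u(t))\notin R$, so $f_R(t,u(t))=f(p(t,u(t)))+c(t)\big(p_2(t,u(t))-u(t)\big)$, and $w'(t)$ splits into $\frac{\partial h}{\partial t}(t,u(t))+\langle\nabla_xh(t,u(t)),f(p(t,u(t)))\rangle$, which is $\le0$ by (H4), plus $c(t)\langle\nabla_xh(t,u(t)),p_2(t,u(t))-u(t)\rangle$, which is $\le0$ by~\eqref{trcon} and $c(t)>0$; hence $w'(t)\le0$ a.e.\ on $J$. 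This gives hypotheses~1--2 of Lemma~\ref{lembar}, and hypothesis~3 is exactly (H5): since $u\in B$ with $\|u\|_0\le C$, either $h(a,u(a))\le0$, i.e.\ $w(a)\le z$, or $h(a,u(a))\le h(b,u(b))$, i.e.\ $w(a)\le w(b)$.

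Lemma~\ref{lembar} then gives one of two things: either $w(t)\le0$ for every $t\in I$, which is exactly $(t,u(t))\in R$ for all $t$ and finishes the proof; or $w\equiv k$ on $I$ for some $k>0$. The heart of the matter is to exclude the second option, and this is precisely what hypothesis (H6) is designed for. In that case $(t,u(t))\notin R$ for every $t\in I$; applying (H6), choose $\widehat h$ such that $(R,(\widehat h,p))$ is again a $C$-solution region of~\eqref{eq1} and $t_1,t_2\in I$ with $\widehat h(t_1,\cdot)=h(t_1,\cdot)$ on ${\mathbb R}^n$ and $\widehat h(t_2,x)\ne h(t_2,x)$ for all $x\in{\mathbb R}^n\backslash R_{t_2}$. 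Since $f_R$ (and hence ${\mathcal J}$) does not depend on $h$, the same $u$ solves~\eqref{eql}--\eqref{mbc}, so the previous paragraph applies verbatim to $\widehat w(t):=\widehat h(t,u(t))$: either $\widehat w\le0$ on $I$, which is impossible because $(t,u(t))\notin R$ for all $t$ forces $\widehat w>0$ on $I$ by (H1) for $\widehat h$, or $\widehat w\equiv\widehat k$ on $I$ with $\widehat k>0$. Evaluating at $t_1$ yields $\widehat k=\widehat h(t_1,u(t_1))=h(t_1,u(t_1))=k$; but $u(t_2)\in{\mathbb R}^n\backslash R_{t_2}$ (because $(t_2,u(t_2))\notin R$), so $\widehat k=\widehat h(t_2,u(t_2))\ne h(t_2,u(t_2))=k$, a contradiction. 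Hence the second option cannot occur and $(t,u(t))\in R$ for all $t\in I$, which proves the theorem.

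The step I expect to be the real obstacle is this exclusion of the constant-positive alternative of Lemma~\ref{lembar}: allowing a non-strict inequality in~\eqref{trcon} (cf.\ Remark~\ref{br}) makes that alternative genuinely possible, and (H6) is the hypothesis introduced exactly to kill it. The ancillary points --- the chain-rule regularity $w\in W^{1,1}_{\operatorname{loc}}(J,{\mathbb R})$ and the reduction of~\eqref{mbc} to~\eqref{cg} under $\|u\|_0\le C$ --- are routine.
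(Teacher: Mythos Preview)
Your proof is correct and follows the same approach as the paper's: apply Proposition~\ref{proM0} at $\lambda=1$, use (H3)--(H5) together with Lemma~\ref{lembar}, and then eliminate the constant-positive alternative via (H6). Your handling of the (H6) step is in fact more explicit than the paper's terse ``either $\widehat h(t,u(t))$ is not constant, and we are done'' --- you note that $f_R$ (and hence the fixed point $u$) is independent of $h$, so the same $u$ may be reused with $\widehat h$, and you spell out why the alternative $\widehat w\le 0$ from Lemma~\ref{lembar} is impossible.
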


\begin{proof} By Proposition~\ref{proM0} there is a solution $u$ of problem~\eqref{eql},\eqref{mbc} for $\lambda=1$ such that $\|u\|_0\le C$. Then, a.\,e. on $\{t\in I\ :\ h(t,u(t))>0\}$, we have that, by (H3) and (H4),
\begin{equation}\label{pasoint}\begin{aligned}\frac{\operatorname{d} h}{\operatorname{d} t}(t,u(t))= & \frac{\partial h}{\partial t}(t,u(t))+\left\langle\nabla_xh(t,u(t)),u'(t)\right\rangle \\ = & \frac{\partial h}{\partial t}(t,u(t))+\left\langle\nabla_xh(t,u(t)),f(p(t,x))+c(t)(p_2(t,x)-x)\right\rangle\le 0.
\end{aligned}\end{equation}

Hence, by (H5) and Lemma~\ref{lembar}, either $h(t,u(t))\le 0$, and thus $(t,u(t))\in R$. As a consequence, $u$ is a solution of~\eqref{eq1},\eqref{cg}, or $h(t,u(t))=k$ for every $t\in I$.

Assume we are in the last case. Then, by (H6), there exists another admissible pair $(\widehat h,p_2)$ and two points $t_1,t_2\in I$ such that (H5) holds and $\widehat h(t_1,x)= h(t_1,x)$ for every $x\in{\mathbb R}^n$ and $\widehat h(t_2,x)\ne h(t_2,x)$ for every $x\in\ {\mathbb R}^n\backslash R_{t_2}$. Then, either $\widehat h(t,u(t))$ is not constant, and we are done, or it is constant, and hence \[ k=h(t_1,u(t_1))=\widehat h(t_1,u(t_1))=\widehat h(t_2,u(t_2))\ne h(t_2,u(t_2))=k,\]  a contradiction.
\end{proof}

\begin{rem}\label{remalt4}Observe that, in the previous theorem, we could have dispensed with (H6) if the inequality in (H4) were strict for, in that case, the last inequality in~\eqref{pasoint} would be strict (see Remark~\ref{remlembar}).
\end{rem}
\begin{rem}Observe that in the proof of Theorem~\eqref{thmprin} hypothesis (H5) is used, exclusively, on solutions of problem~\eqref{eql},\eqref{mbc} for $\lambda\in[0,1]$. Hence, we could provide the following weaker form of (H5) taking this into account.
\begin{enumerate}
\item[(H5')] $h(a,u(a))\le0$ or $h(a,u(a))\le h(b,u(b))$ for any solution $u$ of problem~\eqref{eql},\eqref{mbc} such that $\|u\|_0\le C$ for any $\lambda\in[0,1]$.
\end{enumerate}
\end{rem}

Observe that Proposition~\eqref{proM0} was established for condition~\eqref{cg}. Now we will develop a result for the case~\eqref{cg2} in an analogous fashion.

Consider the problem
\begin{equation}\label{eql3}\begin{aligned} u'(t)= &\lambda f_R(t,u(t)),\\
u(a)= & \widetilde\Phi u:=P_D\left( \frac{\Gamma u+Mu(a)- r}{M}\right) .
\end{aligned}\end{equation}

 Consider now the operator ${\mathcal K}:[0,1]\times{\mathcal C}(I,{\mathbb R}^n)\to{\mathcal C}(I,{\mathbb R}^n)$ defined by \[ {\mathcal K}(\lambda,u):=M^{-1}(\widetilde\Phi u-\lambda \Theta u)+\lambda \int_a^tf_R(s,u(s))\operatorname{d} s.\] 

Proposition~\ref{proM} has a proof very similar to that of Proposition~\ref{proM0}. We provide a sketch of it.

\begin{pro}\label{proM} Let $f:I\times{\mathbb R}^n\to{\mathbb R}^n$ be a Carathéodory function and $(h,p)$ an admissible pair associated to an admissible region $R$. Assume (H6) holds. Then, for every $\lambda\in[0,1]$, problem~\eqref{eql3} has at least one solution. Moreover, there exists
\[ C>\max\{\|x\|\ :\ x\in\pi_2(R)\}\] 
such that
\[ \operatorname{ind}({\mathcal K}(\lambda,\cdot),{\mathcal U})=(-1)^n\] 
for every $\lambda\in[0,1]$ with ${\mathcal U}:=\{u\in{\mathcal C}(I,{\mathbb R}^n)\ :\ \|u\|<C\}$.
\end{pro}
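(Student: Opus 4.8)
The plan is to mirror the proof of Proposition~\ref{proM0} almost line by line, adjusting only the boundary-condition bookkeeping to account for the functional $\Gamma u = r$ instead of $\Gamma(u-u(a)) = r$. First I would note that ${\mathcal K}$ is continuous and completely continuous by the same argument as in \cite[Lemma 2.2]{Frigon}, since $f_R$ is Carathéodory, $\Theta$ and $\widetilde\Phi$ are continuous, and the Nemytskii-type integral operator is compact on bounded sets. Then I would check that fixed points of ${\mathcal K}(\lambda,\cdot)$ are exactly the solutions of~\eqref{eql3}: if $u = {\mathcal K}(\lambda,u)$, evaluating at $t=a$ gives $u(a) = M^{-1}(\widetilde\Phi u - \lambda\Theta u)$, so that $\lambda\Theta u = \widetilde\Phi u - Mu(a)$, whence $u(t) = u(a) + \lambda\int_a^t f_R(s,u(s))\operatorname{d} s$ for all $t\in I$; applying $\Gamma$ and using $\Gamma u = Mu(a) + \lambda\Theta u$ together with the previous identity recovers $u(a) = \widetilde\Phi u$, i.e.~the second line of~\eqref{eql3}, and differentiating recovers the first.

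Next I would establish the a priori bound. The case $\lambda = 0$ is immediate: $u \equiv x$ is constant and $\|u\|_0 = \|u(a)\| = \|\widetilde\Phi u\| \le C$ because $\widetilde\Phi$ takes values in $D = B_{{\mathbb R}^n}[0,C]$. For $\lambda > 0$ one repeats verbatim the computation in Proposition~\ref{proM0}: a.e.\ on $\{t : \|u(t)\| > C\}$ one has $\|u(t)\|' = \langle u(t)/\|u(t)\|,\lambda f_R(t,u(t))\rangle \le \lambda c(t)(1 + \|p_2\|_0 - C) < 0$ by the choice of $c$ and $C \ge 1 + \|p_2\|_0$, so by Lemma~\ref{lembar} and Remark~\ref{remlembar} (with $z = C$, $w = \|u\|$, condition 3.(a) supplied by $\|u(a)\| \le C$) we get $\|u\|_0 \le C$. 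One must choose $C > \max\{\|x\| : x\in\pi_2(R)\}$ and $K > C$ at the outset exactly as before, with ${\mathcal U} = \{u : \|u\|_0 < C\}$ (note the statement uses $C$ where the earlier proof used $K$; I would just take the open ball of the bounding radius, which is harmless).

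For the index computation I would first use the homotopy property on $\lambda \in [0,1]$ to reduce to $\operatorname{ind}({\mathcal K}(0,\cdot),{\mathcal U})$, then observe ${\mathcal K}(0,u)(t) = M^{-1}\widetilde\Phi u$ is constant in $t$, so by the contraction property the index equals $\operatorname{ind}({\mathcal K}(0,\cdot)|_E,E)$ on the finite-dimensional ball $E$. There I would set up a linear homotopy ${\mathcal Q}(\lambda,x) = 2x - \lambda(x - M^{-1}\widetilde\Phi x)$ (or an equivalent affine deformation joining ${\mathcal K}(0,\cdot)$ to $2\,\mathrm{Id}$, whose index on $E$ is $(-1)^n$); the no-fixed-point-on-$\partial E$ verification runs as in Proposition~\ref{proM0}: a putative fixed point $x\in\partial E$ forces $(1+\lambda)x = \lambda M^{-1}\widetilde\Phi x$ up to the affine term, and taking norms yields a radius inequality contradicting $\|x\| = K$ once $K$ is large enough relative to $C$ and $\|r\|/|M|$. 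Multiplicativity of $\widetilde\Phi$ under scaling (the Lemma preceding this one, on $\alpha P_D$) lets one absorb the factor $M$ cleanly. Combining, $\operatorname{ind}({\mathcal K}(\lambda,\cdot),{\mathcal U}) = (-1)^n$ for all $\lambda$, and nontriviality of the index gives existence of a fixed point, hence a solution of~\eqref{eql3}.

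The main obstacle is purely organizational rather than conceptual: getting the algebra of $\widetilde\Phi u = P_D((\Gamma u + Mu(a) - r)/M)$ to collapse correctly at the fixed-point stage, since here $u(a)$ appears on both sides of the boundary relation (unlike in~\eqref{mbc}), so one has to be careful that the substitution $\lambda\Theta u = \widetilde\Phi u - Mu(a)$ is used consistently and that the resulting self-referential identity for $u(a)$ genuinely reduces to $u(a) = \widetilde\Phi u$. I would double-check that the homotopy parameter for the index argument does not collide with $\lambda$ and that the ball $E$ (respectively the constant $K$) is chosen after fixing $C$, exactly as in the previous proof, so that all the strict inequalities in the a priori and no-return arguments hold.
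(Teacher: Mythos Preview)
Your proposal is correct and follows essentially the same route as the paper: the paper's own proof is a two-line sketch that verifies fixed points of ${\mathcal K}$ solve~\eqref{eql3} (by applying $\Gamma$ to the fixed-point identity to obtain $\Gamma u=\widetilde\Phi u$ and differentiating to get $u'=\lambda f_R$) and then explicitly says ``the proof continues as in Proposition~\ref{proM0}''. Your outline does exactly this, supplying the intermediate details the paper omits. One small caution: the algebra you describe yields $\Gamma u=\widetilde\Phi u$, not directly $u(a)=\widetilde\Phi u$; the paper records only the former, so be careful when you write up that the boundary relation you actually recover matches what is needed downstream.
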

\begin{proof}${\mathcal K}$ is a continuous and completely continuous operator and the fixed points of ${\mathcal K}$ are solutions of~\eqref{eql3}. Indeed, if $u={\mathcal K}(\lambda,u)$ then, for every $t\in I$,
\begin{equation}\label{eqpf3}u(t)=M^{-1}(\widetilde\Phi u-\lambda \Theta u)+\lambda \int_a^tf_R(s,u(s))\operatorname{d} s.\end{equation}

Applying $\Gamma$ to both sides in~\eqref{eqpf3} we get $\Gamma u= \widetilde\Phi u$. Also, for a.\,e. $t\in I$, $u'(t)=\lambda f_R(t,u(t))$, so $u$ is a solution of~\eqref{eql3} for every $\lambda\in[0,1]$. The proof continues as in Proposition~\ref{proM}.

\end{proof}

Theorem~\ref{thmprin2} is analogous to Theorem~\ref{thmprin}.
\begin{thm}\label{thmprin2}Let $f:I\times{\mathbb R}^n\to{\mathbb R}^n$ be a Carathéodory function and $(h,p)$ an admissible pair associated to an admissible region $R$ such that $(R,(h,p))$ is a solution region of~\eqref{eq1}. Assume (H6) holds and $M>0$. Then problem~\eqref{eq1},\eqref{cg2} has a solution $u\in W^{1,1}(I,{\mathbb R}^n)$ such that $(t,u(t))\in R$ for every $t\in I$.
\end{thm}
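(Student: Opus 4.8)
The plan is to imitate the proof of Theorem~\ref{thmprin} almost verbatim, substituting Proposition~\ref{proM} for Proposition~\ref{proM0} and condition~\eqref{cg2} for~\eqref{cg}. First I would apply Proposition~\ref{proM} with $\lambda=1$ to obtain a solution $u\in W^{1,1}(I,{\mathbb R}^n)$ of problem~\eqref{eql3} with $\|u\|_0\le C$, where $C$ majorizes $\max\{\|x\|:x\in\pi_2(R)\}$ and is the radius of the ball $D$ appearing in $\widetilde\Phi$. Since $u$ is then confined to $D$, the projection $P_D$ in the definition of $\widetilde\Phi$ acts as the identity on the argument that actually occurs, and, applying $\Gamma$ to the fixed-point identity~\eqref{eqpf3}, one recovers $\Gamma u=r$ exactly as in Proposition~\ref{proM}; hence $u$ satisfies the boundary condition~\eqref{cg2}. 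What then remains is precisely the assertion $(t,u(t))\in R$ for every $t\in I$, equivalently $h(t,u(t))\le 0$ for every $t\in I$.

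To get this I would set $w(t):=h(t,u(t))$ and $J:=w^{-1}((0,+\infty))=\{t\in I:(t,u(t))\notin R\}$. By (H2), $w$ is continuous and lies in $W^{1,1}_{\operatorname{loc}}(J,{\mathbb R})$, and, using $u'(t)=f_R(t,u(t))$ off $R$, its a.e.\ derivative on $J$ is
\[ w'(t)=\frac{\partial h}{\partial t}(t,u(t))+\big\langle\nabla_xh(t,u(t)),\,f(p(t,u(t)))\big\rangle+c(t)\big\langle\nabla_xh(t,u(t)),\,p_2(t,u(t))-u(t)\big\rangle . \]
The first two terms together are $\le 0$ by (H4), and the last term is $\le 0$ because $c(t)>0$ and by the transversality inequality~\eqref{trcon} of (H3); here I would invoke the measure-zero bookkeeping discussed in Remark~\ref{br} to pass legitimately from ``every $x$'' to the curve $x=u(t)$. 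With $w'\le 0$ a.e.\ on $J$, Lemma~\ref{lembar} applies with $z=0$, its hypothesis~3 being precisely (H5) read for condition~\eqref{cg2} (valid since $\|u\|_0\le C$). Thus either $w(t)\le 0$ for all $t\in I$ --- in which case $(t,u(t))\in R$ throughout and $u$ is the solution of~\eqref{eq1},\eqref{cg2} we want --- or $w\equiv k$ for some constant $k>0$.

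In the second case I would invoke (H6): it furnishes $\widehat h$ with $(R,(\widehat h,p))$ again a solution region of~\eqref{eq1} and points $t_1,t_2\in I$ with $\widehat h(t_1,\cdot)=h(t_1,\cdot)$ on ${\mathbb R}^n$ and $\widehat h(t_2,x)\ne h(t_2,x)$ for every $x\in{\mathbb R}^n\backslash R_{t_2}$. Since none of $f_R$, $\Theta$ or ${\mathcal K}$ depends on $h$, the very same $u$ solves the problem built from $\widehat h$, so rerunning the previous step with $\widehat h$ in place of $h$ yields either that $\widehat h(\cdot,u(\cdot))$ is non-constant --- whence, by Lemma~\ref{lembar}, $\widehat h(t,u(t))\le 0$ for all $t$ and we are done --- or that $\widehat h(\cdot,u(\cdot))$ is constant, in which case
\[ k=h(t_1,u(t_1))=\widehat h(t_1,u(t_1))=\widehat h(t_2,u(t_2))\ne h(t_2,u(t_2))=k , \]
the inequality holding because $h(t_2,u(t_2))=k>0$ forces $u(t_2)\in{\mathbb R}^n\backslash R_{t_2}$. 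This contradiction excludes the bad case and completes the argument.

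I do not expect a serious obstacle, the statement being essentially a transcription of Theorem~\ref{thmprin}; the one step that genuinely needs care is the recovery of~\eqref{cg2} from the fixed-point equation of ${\mathcal K}(1,\cdot)$, where one must verify, exactly as in Proposition~\ref{proM0}, that the bound $\|u\|_0\le C$ renders $P_D$ inert on the argument appearing in $\widetilde\Phi u$, together with the routine null-set bookkeeping needed to evaluate (H3) and (H4) along $t\mapsto(t,u(t))$ rather than at a fixed $x$. As Remark~\ref{remalt4} notes, (H6) could be dispensed with if the inequality in (H4) were assumed strict.
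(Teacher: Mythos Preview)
Your proposal is correct and takes essentially the same approach as the paper, which gives no separate proof of Theorem~\ref{thmprin2} and simply declares it ``analogous to Theorem~\ref{thmprin}''. Your detailed imitation of that proof---replacing Proposition~\ref{proM0} by Proposition~\ref{proM}, the pair~\eqref{eql}--\eqref{mbc} by~\eqref{eql3}, and condition~\eqref{cg} by~\eqref{cg2}, while reusing~\eqref{pasoint}, Lemma~\ref{lembar}, (H5) and the (H6) contradiction verbatim---is exactly what is intended.
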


\subsection{An example}
Let $I:=[0,1]$ and consider the problem
\begin{equation}\label{eqeje}\begin{aligned} x'(t)= & -2x(t)e^{y(t)}, & \int_{0}^1 x(s)\operatorname{d} s=1,\\ y'(t)= & -y(t)e^{x(t)}, & \int_{0}^1 y(s)\operatorname{d} s= 1.\end{aligned}
\end{equation}
for a.\,e. $t\in I$. Let $R:=B_{{\mathbb R}^3}[0,2]$, $f(t,x,y):=(-2xe^{y},-ye^x)$, $h(t,x,y)=\frac{1}{2}d_{R}^2(t,x,y)$  and 
\[ p(t,x,y):=(t,x,y)-\nabla h(t,x,y)=P_{R}(t,x,y),\]  for $(t,x,y)\in I\times{\mathbb R}^2$. Now we show that $(R,(h,p))$ is a $2$-solution region of~\eqref{eqeje}.

(H1) $R=h^{-1}{((-\infty,0])}$.

(H2) The map $h$ es continuously differentiable (Theorem~\ref{thmci}).

(H3) $p$ is bounded and such that $p(t,x,y)=(t,x,y)$ for every $(t,x,y)\in R$ and
\begin{align*} & \left\langle\pi_2(\nabla h(t,x,y)),\pi_2(p(t,x,y)-(t,x,y))\right\rangle= -\|\pi_2(\nabla h(t,x,y))\| \\ = & 
\begin{dcases}
 0, & (t,x,y)\in R, \\
 -\frac{\left(x^2+y^2\right) \left(\sqrt{t^2+x^2+y^2}-2\right)^2}{t^2+x^2+y^2}, & (t,x,y)\not\in R, \\
\end{dcases}
\end{align*}
 which is non-positive for $(t,x,y)\in (I\times {\mathbb R}^2)\backslash R$.
 
(H4) Since 
\[ f(p(t,x,y))=
\begin{dcases}
 \left( -2 x e^{-y},-ye^{-x} \right) , &  (t,x,y)\in R, \\
 \left( -\frac{4 x e^{-\frac{2 y}{\sqrt{t^2+x^2+y^2}}}}{\sqrt{t^2+x^2+y^2}},-\frac{2 y e^{-\frac{2 x}{\sqrt{t^2+x^2+y^2}}}}{\sqrt{t^2+x^2+y^2}}\right) , &  (t,x,y)\not\in R,\\
\end{dcases}
\] 
we have that
\begin{align*} & \frac{\partial h}{\partial t}(t,x,y)+\left\langle\nabla_{(x,y)}h(t,x,y),f(p(t,x,y))\right\rangle \\ = &  -\frac{e^{-\frac{2 (x+y)}{\sqrt{t^2+x^2+y^2}}} \left(\sqrt{t^2+x^2+y^2}-2\right) }{\left(t^2+x^2+y^2\right)^{3/2}}\\ & \cdot\left(  4 x^2 e^{\frac{2 x}{\sqrt{t^2+x^2+y^2}}} \sqrt{t^2+x^2+y^2}-t e^{\frac{2 (x+y)}{\sqrt{t^2+x^2+y^2}}} \left(t^2+x^2+y^2\right)+2 y^2 e^{\frac{2 y}{\sqrt{t^2+x^2+y^2}}} \sqrt{t^2+x^2+y^2}\right) 
< 0,\end{align*}
 for a.e. $t$ and every $(x,y)$ with $(t,x,y)\not\in R$. This fact is illustrated in Figure \ref{fig:cont}.

 \begin{figure}
 \centering
 \includegraphics[width=0.5\linewidth]{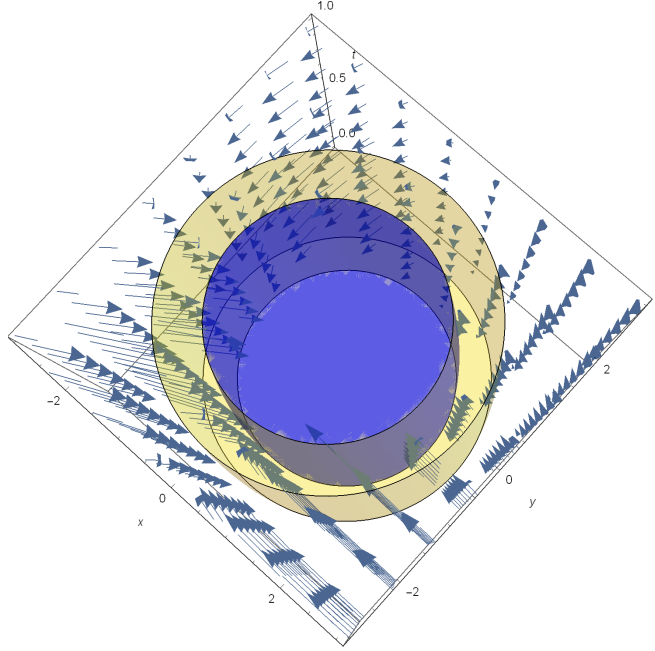}
 \caption{The inner surface is the boundary of the region $R$ (where $h=0$), the outer surface is the level set $h=1/2$ and the vector field the map $\widetilde f=(1,f)$. Observe that, as stated in Remark~\ref{remint}, for $c>0$, $\widetilde f$ points, at each point in $h^{-1}(c)$, in the direction of decreasing value of $c$.}
 \label{fig:cont}
 \end{figure}
 
Observe that $M=1$. Also, we have that $m=\max\{\|x\|\ :\ x\in\pi_2(R)\}=2$, $\|p_2\|_0=2$ and $C:=\max\left\{m,1+\|p_2\|_0\right\}=2$.

(H5') Let us check that, for any solution $u$ of problem~\eqref{eql3} such that $\int_I u= r=\left( 1,1\right) $ and $\|u\|_0\le 2$, we have that $h(0,u(0))\le0$ or $h(0,u(0))\le h(1,u(1))$. In other words, that $d_{R}(0,u(0))\le 0$ or $d_{R}(0,u(0))\le d_{R}(1,u(1))$, which is equivalent to $\|u(0)\|\le 2$ or \[ \max\{\|u(0)\|-2,0\}\le \max\{\sqrt{1+\|u(0)\|^2}-2,0\}.\]

Problem~\eqref{eql3}, in this context, can be expressed as
\begin{equation*}\begin{aligned} u'(t)= &\lambda \begin{dcases} \left( -2 x e^{-y},-ye^{-x} \right) , & (t,x,y)\in R,\\
f(p(t,x,y))+c(t)(p_2(t,x,y)- (x,y)), & (t,x,y)\in (I\times{\mathbb R}^n)\backslash R,\end{dcases}\\
u(-1)= & \begin{dcases} \int_{-1}^0 u+u(-1)- r, & \left\|\int_{-1}^0 u+u(-1)- r\right\|\le 2, \\ 2\frac{\int_{-1}^0 u+u(-1)- r}{\left\|\int_{-1}^0 u+u(-1)- r\right\|}, &  \left\|\int_{-1}^0 u+u(-1)- r\right\|>2.\end{dcases}
\end{aligned}\end{equation*}

Taking norms in the boundary conditions it is clear that  $\|u(-1)\|\le 2$  and, hence, (H5') is satisfied.

We can conclude that there is a solution of problem \eqref{eqeje} in $R=B_{{\mathbb R}^3}[0,2]$. A numerical approximation of this solution is shown in Figure \ref{fig:plot}.

\begin{figure}[h]
\centering
\includegraphics[width=0.7\linewidth]{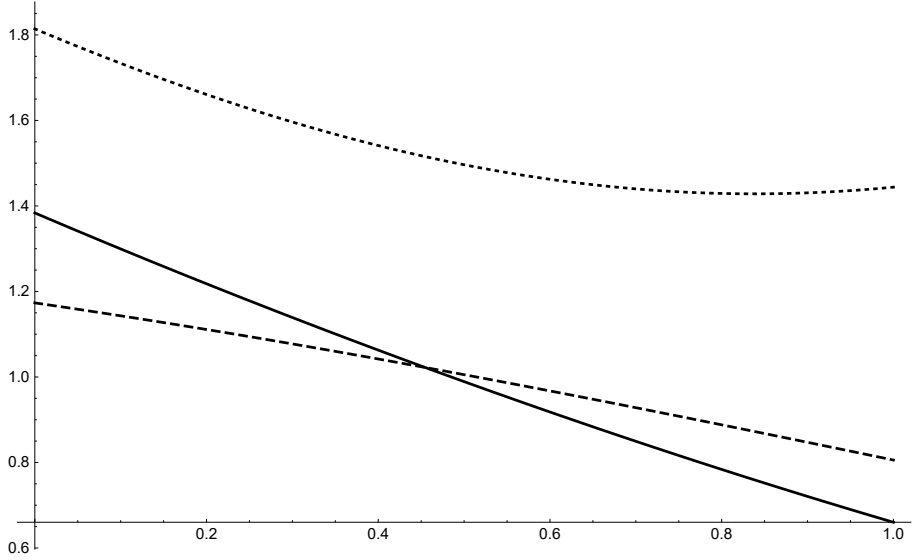}
\caption{Representation of the numerical solution of problem \eqref{eqeje}. The continuous line represents $x(t)$, the dashed line $y(t)$ and the dotted line $\|(t,x(t),u(t))\|$. Observe that the dotted line is always below two.}
\label{fig:plot}
\end{figure}

\section{Multiplicity results}

As we did in the case of admissible regions and solution regions, we present here a slightly modified definition of strict solution region \cite{Frigon}.

\begin{dfn} A solution region $(R,(h,p))$ is a \textit{strict $C$-solution region} of problem~\eqref{eq1} if the following hold:
\begin{enumerate}
\item[(H0')] $\mathring R_t\ne\emptyset$ for every $t\in I$.
\item[(H4'')] There exists $\varepsilon\in{\mathbb R}^+$ such that $\frac{\partial h}{\partial t}(t,x)+\left\langle\nabla_xh(t,x),f(p(t,x))\right\rangle\le0$ for a.e. $t$ and every $x$ with $(t,x)\in h^{-1}((-\varepsilon,0))$.
Also, $\frac{\partial h}{\partial t}$ and $\nabla_x h$ are locally Carathéodory in $h^{-1}((-\varepsilon,0))$.
\item[(H5'')] $h(a,u(a))<0$ or $h(a,u(a))< h(b,u(b))$ for any $u\in B$ such that $\|u\|_0\le C$.
\end{enumerate}
\end{dfn}

\begin{rem} (H0') is a stronger condition than (H0) and (H5'') than (H5). On the other hand, (H4'') complements (H4) by extending it to part of the interior of $R$. Clearly, if $h\ge0$, such as would be the case if it had been obtained by the construction in Theorem~\ref{thmar} or in the previous example, this condition would be vacuous.
\end{rem}

Now we extend \cite[Proposition 6.3]{Frigon} to the case of conditions~\eqref{cg} or~\eqref{cg2}. The proof is basically the same, but we include it for completeness.

\begin{pro}\label{ssl} Let $f:I\times{\mathbb R}^n\to{\mathbb R}^n$ be a Carathéodory function and $(R,(h,p))$ a strict solution region of problem~\eqref{eq1}. Assume $u$ is a solution of problem~\eqref{eq1} such that $(t,u(t))\in R$ for every $t\in I$. Then $(t,u(t))\in \mathring R$ for every $t\in I$.
\end{pro}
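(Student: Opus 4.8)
The plan is to argue by contradiction, using the boundary/interior dichotomy of the strict hypotheses (H4'') and (H5'') together with Lemma~\ref{lembar}. Suppose $u$ is a solution of~\eqref{eq1} with $(t,u(t))\in R$ for all $t\in I$ but that $(t_0,u(t_0))\in\partial R$ for some $t_0\in I$, i.e. $h(t_0,u(t_0))=0$. I would first observe that, since $R=h^{-1}((-\infty,0])$ and $h$ is continuous, the set $J:=\{t\in I:h(t,u(t))>-\varepsilon\}$ — where $\varepsilon$ is the constant from (H4'') — is a relatively open neighbourhood in $I$ of the point $t_0$, and on $J$ we have $(t,u(t))\in h^{-1}((-\varepsilon,0])$. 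The point is to run Lemma~\ref{lembar} with $w(t):=h(t,u(t))$ and $z:=-\varepsilon$ (strictly speaking one takes $z$ just below $0$; any value in $(-\varepsilon,0)$ works, and I would fix $z\in(-\varepsilon,0)$).

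The key steps, in order: (i) verify $w\in W^{1,1}_{\mathrm{loc}}(J,\mathbb R)$, which follows from $u\in W^{1,1}(I,\mathbb R^n)$, the chain rule, and the local Carathéodory regularity of $\partial h/\partial t$ and $\nabla_x h$ on $h^{-1}((-\varepsilon,0))$ guaranteed by (H4''); (ii) compute, for a.e.\ $t\in J$,
\[
w'(t)=\frac{\partial h}{\partial t}(t,u(t))+\left\langle\nabla_xh(t,u(t)),u'(t)\right\rangle
=\frac{\partial h}{\partial t}(t,u(t))+\left\langle\nabla_xh(t,u(t)),f(t,u(t))\right\rangle,
\]
using that $u'=f(t,u(t))$ a.e.\ and that $p(t,u(t))=(t,u(t))$ since $(t,u(t))\in R$ (by (H3)); then (H4'') gives $w'(t)\le0$ a.e.\ on $J$; (iii) check condition~3 of Lemma~\ref{lembar}: by (H5'') applied to $u$ (note $\|u\|_0\le C$ follows from the bound established in the proof of Theorem~\ref{thmprin}, since $u$ arises as such a solution) we have $h(a,u(a))<0$ or $h(a,u(a))<h(b,u(b))$, so either $w(a)<0$, hence (shrinking $z$ if necessary) $w(a)\le z$ is false in general — here I must be careful, so instead I observe that $h(a,u(a))<0$ means $w(a)<0\le 0$, and since we may take $z$ arbitrarily close to $0$ from below while still above $\max\{w(a),-\varepsilon\}$ when $w(a)<0$, condition 3(a) $w(a)\le z$ holds; in the periodic-type alternative $w(a)<w(b)$, we directly have condition 3(b). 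Then Lemma~\ref{lembar} yields that either $w(t)\le z<0$ for all $t$, or $w\equiv k$ for some $k>0$; the latter is impossible since $(t,u(t))\in R$ forces $w\le0$, and the former contradicts $w(t_0)=0$.

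The main obstacle I anticipate is the bookkeeping around the constant $z$ in hypothesis~3 of Lemma~\ref{lembar}: the lemma wants $w(a)\le z$ (option (a)) whereas (H5'') gives the strict inequality $h(a,u(a))<0$, so one has to choose $z\in(-\varepsilon,0)$ larger than $h(a,u(a))$ — this is possible precisely because the inequality is strict, which is the whole reason the \emph{strict} solution-region hypotheses are needed — and simultaneously ensure $h(t_0,u(t_0))=0>z$ so that $t_0\in J$ and the conclusion $w\le z$ is genuinely contradicted. A cleaner route, which I would actually adopt, is to avoid picking $z$ adaptively: fix any $z\in(-\varepsilon,0)$, set $J=w^{-1}((z,+\infty))$, and handle the two cases of (H5'') separately — in the case $h(a,u(a))<0$ one shrinks to note that if $w(a)>z$ then $a\in J$ and one reruns the argument on the component of $J$ containing $a$, concluding $w\le z$ there and in particular near $a$, contradicting nothing yet, so one must instead directly target the component containing $t_0$; the robust statement is simply that $w$ restricted to the connected component of $J$ containing $t_0$ is nonincreasing and bounded above by $0$, starts (at the left endpoint of that component, where $w=z$ unless the component reaches $a$) at a value $\le z$ or satisfies the periodic condition, forcing $w\le z$ on it — contradicting $w(t_0)=0>z$. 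I would write this up by applying Lemma~\ref{lembar} on the interval $I$ wholesale, exactly as in the proof of Theorem~\ref{thmprin}, which already absorbs these component considerations, and conclude that $w(t)\le z<0$ for all $t\in I$ (the constant alternative being excluded as above), whence $(t,u(t))\in\mathring R$ for every $t\in I$.
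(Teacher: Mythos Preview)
Your approach via Lemma~\ref{lembar} has a genuine gap at the level set $h^{-1}(\{0\})=\partial R$. The hypotheses of a strict solution region supply regularity of $h$ only on $(I\times\mathbb{R}^n)\setminus R$ (via (H2)) and on $h^{-1}((-\varepsilon,0))$ (via (H4'')); nothing is assumed at $\partial R$. But your set $J=w^{-1}((z,+\infty))$, for any $z\in(-\varepsilon,0)$, necessarily contains the point $t_0$ where $w(t_0)=0$ and $(t_0,u(t_0))\in\partial R$. At such points you cannot invoke the chain rule to conclude $w\in W^{1,1}_{\mathrm{loc}}(J)$, nor can you apply (H4'') to obtain $w'\le0$, because (H4'') is stated on the \emph{open} slab $h^{-1}((-\varepsilon,0))$, not on $h^{-1}((-\varepsilon,0])$. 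Your own parenthetical ``on $J$ we have $(t,u(t))\in h^{-1}((-\varepsilon,0])$'' already flags the issue: the right endpoint $0$ is precisely where the hypotheses are silent, and no choice of $z$ cures this since $t_0\in J$ regardless.

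The paper's proof sidesteps this by not invoking Lemma~\ref{lembar} at all. It argues directly: set $J=\{t:w(t)=0\}$, use (H5'') to see $a\notin J$, let $r=\min J\in(a,b]$, and then---because $w<0$ on $[a,r)$ while $w(r)=0$---select $t_1<t_2$ in $[a,r)$ with $w(t_1)<w(t_2)$ and $w([t_1,t_2])\subset(-\varepsilon,0)$. On this subinterval (H4'') applies cleanly, and integrating
\[
\frac{\operatorname{d}}{\operatorname{d} t}h(t,u(t))=\frac{\partial h}{\partial t}(t,u(t))+\left\langle\nabla_xh(t,u(t)),f(p(t,u(t)))\right\rangle\le 0
\]
from $t_1$ to $t_2$ contradicts $w(t_1)<w(t_2)$. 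By approaching $\partial R$ from the interior without ever touching it, the argument remains inside the region where the regularity hypotheses hold.

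A minor point: your justification of $\|u\|_0\le C$ as ``follows from the bound established in the proof of Theorem~\ref{thmprin}'' is misplaced, since the proposition concerns an arbitrary solution with graph in $R$, not one produced by that theorem. The correct (and simpler) reason is that $(t,u(t))\in R$ forces $\|u(t)\|\le m\le C$.
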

\begin{proof} We do the proof in the case of the case of conditions~\eqref{cg} or~\eqref{cg2}. Assume $J:=\{t\in I\ :\ h(t,u(t))=0\}\ne\emptyset$. Observe that $a\not\in J$. If that were the case, $0=h(a,u(a))<h(b,u(b))\le0$, which is a contradiction. Let $r=\min J\in(a,b]$. We have that $h(t,u(t))<0$ for $t\in [0,r)$. Let $\varepsilon\in{\mathbb R}^+$ as in (H4''). Since $u$ and $h$ are continuous, there exist $t_1,t_2\in[0,r)$, $t_1<t_2$ such that $h(t_1,u(t_1))<h(t_2,u(t_2))$ and $h(t,u(t))\in(-\varepsilon,0)$ for all $t\in[t_1,t_2]$. Using (H4'') we get
\begin{align*}0< & h(t_2,u(t_2))-h(t_1,u(t_1))=\int_{t_1}^{t_2}\frac{\operatorname{d} }{\operatorname{d} t}h(t,u(t))\operatorname{d} t\\ = & \int_{t_1}^{t_2}\left( \frac{\partial h}{\partial t}(t,x)+\left\langle\nabla_xh(t,u(t)),f(p(t,u(t)))\right\rangle\right) \operatorname{d} t\le 0,\end{align*}
which is a contradiction and, hence, $J=\emptyset$ and $h(t,u(t))<0$ for every $t\in I$ Since $h$ is continuous and $R=h^{-1}((-\infty,0])$, $\partial R=h^{-1}(0)$, so $(t,u(t))\in \mathring R$ for every $t\in I$.
\end{proof}
\begin{rem} The statement \emph{`$(t,u(t))\in \mathring R$ for every $t\in I$'} in Proposition~\ref{ssl} is slightly stronger than the statement \emph{`$u(t)\in \mathring R_t$ for every $t\in I$'} in \cite[Proposition 6.3]{Frigon}, but the proof in \cite{Frigon} also covers this case. It can be checked that \[ \mathring R\subset\bigcup_{t\in I}(\{t\}\times\mathring R_t),\]  but the reverse content is not in general true. In fact, $\pi_1\left( \bigcup_{t\in I}(\{t\}\times\mathring R_t)\right) $ might be connected while $\pi_1(\mathring R)$ might be not.
\end{rem}

Proposition~\ref{ssl} allows us to prove a variety of multiplicity results in a standard way through Fixed Point Index Theory (cf. \cite{Frigon,infante2016non}).

\section{Conclusions}

As it was pointed out before, we have substituted the original condition
\begin{itemize}
\item[(H3')] $p$ is bounded and such that $p(t,x)=(t,x)$ for every $(t,x)\in R$ and 
\begin{equation*}\left\langle\nabla_xh(t,x),p_2(t,x)-x\right\rangle<0\text{ for a.\,e. $t$ and every $x$ with $(t,x)\in (I\times {\mathbb R}^n)\backslash R$.}\end{equation*}
\end{itemize}
in the definition  of admissible region \cite{Frigon} by the weaker version (H3). As a result we had to ask for (H6) to be satisfied in order to prove Propositions~\ref{proM0} and~\ref{proM} and Theorems~\ref{thmprin} and~\ref{thmprin2}. The reason for this change was the need to prove de existence of admissible pairs associated to a given admissible function. Theorem~\ref{thmar} provides such a result and it may be possible to improve it by obtaining an admissible pair satisfying (H3') instead of (H3).

Such a proof would not be devoid of difficulties. There are several ways to construct a ${\mathcal C}^\infty$ function $f$ that vanishes only on $R$, for instance convolving the indicator function with a bump function, using a Whitney's cover (see \cite[Proposition~3.3.6]{Krantz}, \cite[Section 5.3]{krantz2} or \cite{Whitney}), etc. Furthermore, it seems possible to obtain such a function further satisfying that $(\nabla_xf)^{-1}(\{0\})$ is countable (in the case of a closed subset of ${\mathbb R}$ this is straightforward) although the topology of $R$ may play an important role in the construction. 

Various approaches may be taken at this point. Analytic functions on ${\mathbb R}$ satisfy that, if non-constant, they cannot vanish on a set with accumulation points, a fact that could be relevant when we observe that, Whitney's Extension Theorem \cite[Theorem I]{Whitney} provides an extension that is analytic outside of the original domain of the function.

As said before, the topology of $R$ plays an important role. This is due to the following result.
\begin{lem} Let $f:{\mathbb R}^n\to[0,+\infty)$ be differentiable. Let $R:=f^{-1}(\{0\})=R$. Let $C$ be a bounded connected component of ${\mathbb R}^n\backslash R$. Then there exists $x\in C$ such that $\nabla f(x)=0$.
\end{lem}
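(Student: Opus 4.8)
The plan is to deduce the statement from a standard compactness-and-boundary argument: on the closure $\overline C$, which is compact (since $C$ is bounded) and on whose boundary $f$ vanishes (because $\partial C\subset R$, as $R$ is closed and $C$ is a connected component of its complement), the function $f$ attains a maximum at some interior point, and at that interior maximum the gradient vanishes. The only thing to rule out is the degenerate case in which the maximum is attained only on $\partial C$, i.e. $f\equiv 0$ on $\overline C$; but then $C\subset R$, contradicting $C\subset{\mathbb R}^n\backslash R$ and $C\ne\emptyset$. Hence the maximum is attained at some $x\in C$ with $f(x)>0$, and $\nabla f(x)=0$.

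In detail, first I would record the topological facts: since $R$ is closed, $C$ is open, and $\partial C=\overline C\backslash C$ is contained in $\partial({\mathbb R}^n\backslash R)\subset R=f^{-1}(\{0\})$; in particular $f|_{\partial C}\equiv 0$. Next, $\overline C$ is closed and bounded, hence compact, so the continuous function $f$ attains its maximum $\alpha:=\max_{\overline C}f$ at some point $x_0\in\overline C$. If $\alpha=0$ then $f\equiv 0$ on $\overline C$, so $\overline C\subset f^{-1}(\{0\})=R$, contradicting the nonemptiness of $C\subset{\mathbb R}^n\backslash R$. Therefore $\alpha>0$, and since $f$ vanishes on $\partial C$ the maximum cannot be attained there, so $x_0\in C$. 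Finally, $x_0$ is an interior point of the domain ${\mathbb R}^n$ at which the differentiable function $f$ has a local maximum, so $\nabla f(x_0)=0$, and $x_0\in C$ is the desired point.

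I do not expect a genuine obstacle here; the one point deserving a careful word is the claim $\partial C\subset R$. This is the elementary fact that a connected component $C$ of the open set ${\mathbb R}^n\backslash R$ is itself open (components of open subsets of ${\mathbb R}^n$ are open, since ${\mathbb R}^n$ is locally connected), so that any boundary point of $C$ lying in ${\mathbb R}^n\backslash R$ would have a connected neighbourhood meeting $C$ and contained in ${\mathbb R}^n\backslash R$, forcing that whole neighbourhood into $C$ and contradicting the point being on $\partial C$; hence $\partial C\subset R$. With this in hand the rest is the routine extremum argument above. (Note that boundedness of $C$ is used only to guarantee compactness of $\overline C$ and hence the existence of the maximiser; without it the conclusion can fail, e.g. $f(x)=1+\sin x$ on ${\mathbb R}$... but actually that still has critical points — the honest counterexample is a monotone-type $f$ on an unbounded component, which is exactly why the hypothesis is imposed.)
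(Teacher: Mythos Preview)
Your proof is correct and follows essentially the same approach as the paper: compactness of $\overline C$, the observation that $f>0$ on $C$ while $f=0$ on $\partial C$, hence the maximum lies in the interior $C$, where the gradient must vanish. You simply supply more detail (the justification that $\partial C\subset R$ via local connectedness, and the explicit exclusion of the degenerate case $\alpha=0$) than the paper's three-line argument.
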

\begin{proof} Since $C$ is bounded, $\overline C$ is compact and, hence, being $f$ continuous, it reaches a maximum value in $\overline C$. Since $f>0$ in $C$ and $f$ is $0$ in $\partial C$, the maximum is attained at some point $x\in C$. Since $f$ differentiable in the open set $C$ we have that $f'(x)=0$.
\end{proof}
Furthermore, any open set in ${\mathbb R}^n$ may have at most a countable number of connected components. This is due to the fact that ${\mathbb R}^n$ is a second countable topological space. This means that we have a lower bound for the number of zeros the gradient of a function $f:{\mathbb R}^n\to[0,+\infty)$ may have in $f^{-1}({\mathbb R}^+)$.

The relation of critical points of a function with the connected components of the domain is also consequence of a basic result in Morse Theory known as the Morse inequalities \cite[Theorem~5.2]{Milnor}. To be precise, it relates the topological invariants known as Betti numbers (the first of them being the number of connected components) to the number of the different types of non-degenerate singularities of the function. Thus, Morse Theory might shed light on our problem. We have the following well known results.
\begin{thm}[{\cite[Corollary 2.18]{Matsumoto}}] A non-degenerate critical point is isolated.
\end{thm}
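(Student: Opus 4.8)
The statement to prove is that a non-degenerate critical point of a smooth function is isolated; this is the final \thm\ in the excerpt, attributed to \cite[Corollary 2.18]{Matsumoto}. The plan is to reduce to a local statement via the Morse Lemma (or, if one does not want to invoke it, directly via the Inverse Function Theorem applied to the gradient map). Let $p$ be a non-degenerate critical point of $f\colon{\mathbb R}^n\to{\mathbb R}$, meaning $\nabla f(p)=0$ and the Hessian $\mathrm{Hess}\,f(p)$ is an invertible symmetric matrix. Consider the map $F\colon U\to{\mathbb R}^n$ defined on a neighbourhood $U$ of $p$ by $F(x)=\nabla f(x)$; its differential at $p$ is exactly $DF(p)=\mathrm{Hess}\,f(p)$, which is invertible by hypothesis.

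First I would invoke the Inverse Function Theorem: since $F$ is $C^1$ (assuming $f$ is $C^2$, which is implicit in ``non-degenerate critical point'') and $DF(p)$ is a linear isomorphism, there is an open neighbourhood $V\subset U$ of $p$ on which $F$ is a diffeomorphism onto its image $F(V)$, hence in particular injective on $V$. Next, since $F(p)=\nabla f(p)=0$ and $F$ is injective on $V$, the only point $x\in V$ with $F(x)=0$ is $x=p$ itself. But the critical points of $f$ are precisely the zeros of $F=\nabla f$; therefore $p$ is the unique critical point of $f$ lying in $V$, which is exactly the assertion that $p$ is isolated among the critical points.

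If one prefers to stay within Morse-theoretic language rather than citing the Inverse Function Theorem, one can instead apply the Morse Lemma, which gives coordinates $(y_1,\dots,y_n)$ centred at $p$ in which $f = f(p) - y_1^2 - \cdots - y_k^2 + y_{k+1}^2 + \cdots + y_n^2$ on a neighbourhood of $p$; in these coordinates $\nabla f$ vanishes only at the origin, so again $p$ is isolated. Either route is short; the only genuine point to be careful about is the smoothness hypothesis on $f$ (one needs $f\in C^2$ for the Hessian and the Inverse Function Theorem argument, and $C^\infty$ or at least $C^2$ if one cites the Morse Lemma in its standard form), but this is already part of what ``non-degenerate critical point'' presupposes, so there is no real obstacle. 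I would therefore expect the main — indeed only — subtlety to be making explicit that the local injectivity of $\nabla f$ delivered by the Inverse Function Theorem translates directly into the global statement ``$p$ is isolated in the set of critical points,'' since that set is cut out by $\nabla f = 0$.
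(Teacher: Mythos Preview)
Your argument is correct: applying the Inverse Function Theorem to $F=\nabla f$ (or, equivalently, invoking the Morse Lemma) is the standard and complete proof that a non-degenerate critical point is isolated. However, the paper does not give its own proof of this statement at all --- it is stated as a citation of \cite[Corollary~2.18]{Matsumoto} in the Conclusions section and used as background, with no accompanying proof environment. So there is nothing in the paper to compare your approach against; your proposal simply supplies a proof where the paper was content to quote the reference.
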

\begin{thm}[{\cite[Theorem 2.20]{Matsumoto} Existence of Morse functions}] Let $M$ be a closed $m$-manifold and $g\in\mathcal C^\infty(\mathbb R^n,\mathbb R)$. Then there exists a Morse function $f\in\mathcal C^\infty(\mathbb R^n,\mathbb R)$ arbitrarily close to $g$.
\end{thm}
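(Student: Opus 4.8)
The plan is to reduce the statement to the elementary Euclidean case and then invoke Sard's theorem, the point being that Morse-ness can be forced by a small \emph{linear} perturbation. Given a smooth $g$ defined on a bounded open set $U\subset\mathbb{R}^n$, set $g_a(x):=g(x)-\langle a,x\rangle$ for $a\in\mathbb{R}^n$. A point $x\in U$ is a critical point of $g_a$ exactly when $\nabla g(x)=a$, and such a critical point is non-degenerate precisely when the Hessian $\operatorname{Hess}g(x)$ is invertible. Since $\operatorname{Hess}g$ is the Jacobian of the smooth map $\nabla g\colon U\to\mathbb{R}^n$, non-degeneracy of \emph{every} critical point of $g_a$ is equivalent to $a$ being a regular value of $\nabla g$. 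By Sard's theorem the set of regular values of $\nabla g$ has full measure, hence is dense, so one may choose $a$ of arbitrarily small norm making $g_a$ a Morse function. On the bounded set $U$ the term $\langle a,\cdot\rangle$ is uniformly small while all derivatives of $g_a$ of order $\ge 2$ coincide with those of $g$, so $g_a$ is as close to $g$ as desired in the $C^\infty$-topology.

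First I would establish this local statement, which already settles the assertion on any single coordinate chart (and is exactly what is literally required if $g$ is read as a function on $\mathbb{R}^n$). To pass to the closed manifold $M$, regarded either through a finite atlas or through a Whitney embedding $M\hookrightarrow\mathbb{R}^N$, I would cover $M$ by finitely many compact coordinate pieces $K_1,\dots,K_\ell$ and proceed by induction. Having arranged, after a small perturbation, that $g$ has only non-degenerate critical points on $K_1\cup\cdots\cup K_j$, I would modify $g$ by a further small linear perturbation localized near $K_{j+1}$ via a bump function from a partition of unity, chosen by the local argument to make every critical point in $K_{j+1}$ non-degenerate. The quoted result that non-degenerate critical points are isolated guarantees that the Morse property already secured is \emph{stable}: the set of functions with only non-degenerate critical points on a fixed compact set is open in the $C^2$-topology, so a sufficiently small further perturbation cannot disturb the non-degeneracies obtained at earlier stages.

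The main obstacle is precisely this globalization: ensuring that correcting the critical points inside $K_{j+1}$ neither creates nor degrades critical points on the previously treated region, while keeping the cumulative perturbation below the prescribed tolerance. This is resolved by combining two ingredients: the openness of the Morse condition on each compact $K_i$, which furnishes a strictly positive margin at each step, and the density supplied by Sard's theorem, which lets each step's perturbation be taken smaller than that margin and also smaller than a fixed fraction of the total allowance. Summing the $\ell$ perturbations then yields a single function $f\in\mathcal{C}^\infty(M,\mathbb{R})$, Morse on all of $M$ and arbitrarily close to $g$, which completes the argument.
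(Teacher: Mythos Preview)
The paper does not prove this theorem; it is simply quoted from Matsumoto's textbook in the concluding section as background material, with no argument supplied. Consequently there is no ``paper's own proof'' to compare against.

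That said, your sketch is correct and is precisely the standard argument one finds in Matsumoto: locally, perturb $g$ by a linear functional $\langle a,\cdot\rangle$, observe that non-degeneracy of all critical points of $g_a$ is equivalent to $a$ being a regular value of $\nabla g$, and invoke Sard's theorem to produce arbitrarily small admissible $a$; globally, patch over a finite cover using the $C^2$-openness of the Morse condition on compact sets so that later perturbations do not spoil earlier ones. You also correctly flag the oddity in the statement as printed (the closed manifold $M$ is mentioned but $g$ and $f$ are written as functions on $\mathbb{R}^n$); your reading---treat the Euclidean version as the local model and then globalize to $M$---is the intended one.
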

Since Morse functions have only non-degenerate critical points, they have at most a countable number of such points, making them another suitable approach to the problem.

\section*{Acknowledgements}
The author would like to acknowledge his gratitude towards Professor Santiago Codesido for his help with the code for the computation of the numerical solution in the example, to Professor Marlène Frigon for her explanatory comments concerning her paper \cite{Frigon} and to Professors Rodrigo López Pouso and Ignacio Márquez Albés for their insightful advice regarding linear functionals defined by Lebesgue-Stieltjes integration.

\end{document}